\newtheorem{theorem}{Theorem}[section]
\newtheorem{lemma}[theorem]{Lemma}
\newtheorem{corollary}[theorem]{Corollary}
\newtheorem{assumption}[theorem]{Assumption}
\newenvironment{proof}{\noindent {\bf
Proof}.\ }{\proofbox\par\smallskip\par}
\def\numberlikeadb{\global\def\theequation{\thesection.\arabic{equation}}}
\newcommand{\halmos}{\rule{1ex}{1.4ex}}
\newcommand{\proofbox}{\hspace*{\fill}\mbox{$\halmos$}}
\newcommand{\E}{\mbox{\bf E}}
\newcommand{\pr}{\mbox{\bf P}}
\newcommand{\reals}{{R}}
\newcommand{\ints}{\mathbb{Z}}
\newcommand{\eqa}{\begin{eqnarray}}
\newcommand{\ena}{\end{eqnarray}}
\newcommand{\eq}{\begin{equation}}
\newcommand{\en}{\end{equation}}
\newcommand{\eqs}{\begin{eqnarray*}}
\newcommand{\ens}{\end{eqnarray*}}
\def\l{\lambda}
\def\a{\alpha}
\def\b{\beta}
\def\g{\gamma}
\def\d{\delta}
\def\D{\Delta}
\def\h{\eta}
\def\z{\zeta}
\def\th{\theta}
\def\k{\kappa}
\def\m{\mu}
\def\n{\nu}
\def\r{\rho}
\def\s{\sigma}
\def\t{\tau}
\def\f{\varphi}
\def\nin{\noindent}
\def\msk{\medskip}
\def\Blb{\left\{}
\def\Brb{\right\}}
\def\giv{\,|\,}
\def\non{\nonumber}
\def\integ{{\mathbb Z}}
\def\Eq{\ =\ }
\def\DEq{\ :=\ }
\def\Le{\ \le\ }
\def\sjo{\sum_{j\ge0}}
\def\sji{\sum_{j\ge1}}
\def\sio{\sum_{i\ge0}}
\def\sii{\sum_{i\ge1}}
\def\slo{\sum_{l\ge0}}
\def\Ref#1{{\rm (\ref{#1})}}
\def\vol#1{{\bf #1},}
\def\bp{\begin{proof}}
\def\ep{\end{proof}}
\def\bone{{\bf 1}}
\def\Bl{\left(}
\def\Br{\right)}
\def\half{{\textstyle{\frac12}}}
\def\ff{{\cal F}}
\def\xx{{\cal X}}
\def\ignore#1{}
\def\ex{\E}
\def\sjni{\sum_{j\neq i}}
\def\tmax{t_{\max}}
\def\Z{\ints}
\def\Zp{\Z_+}
\def\ssjt{\sum_{j\colon \s_j \le t}}
\def\R{\reals}
\def\rr{{\mathcal R}}
\def\jj{{\mathcal J}}
\def\sjj{\sum_{J\in \jj}}
\def\x{\xi}
\def\smo{\sum_{m\ge0}}
\def\sko{\sum_{k\ge0}}
\def\sro{\sum_{r\ge0}}
\def\nm#1{{\|#1\|_\m}}
\def\uii{^{(i)}}
\def\uN{^{(N)}}
\def\Def{\DEq}
\def\tM{{\widetilde M}}
\def\tm{{\widetilde m}}
\def\NNN{\tM_N}
\def\ZZZ{Z}
\def\nnn{\tm_N}
\def\Ge{\ \ge\ }
\def\gga{\gamma}
\def\rmax{r_{\rm max}}
\def\sutt{\sup_{0\le t\le T}}
\def\sflnn{\sqrt{\frac{\log N}N}}
\def\ui{^{(1)}}
\def\uii{^{(i)}}
\def\ut{^{(2)}}
\def\txn{t^X_n}
\def\txnn{t^{X_N}_n}
\def\tilt{{\tilde\t}}
\def\halfh{{\textstyle{\frac14}}}
\def\txi{t^X_\infty}
\def\txin{t^{X_N}_\infty}
\def\wtxi{\wedge\txi}
\def\wtxn{\wedge\txn}
\def\wtxin{\wedge\txin}
\def\wtxnn{\wedge\txnn}
\def\lnti{\lim_{n\to\infty}}
\def\wtkb{\wedge\tkb}
\def\tkb{\tilt_k\uN(B)}
\def\wtaz{\wedge\t\uN(a,\z)}
\def\suttwt{\sup_{0\le t\le T\wtaz}}
\def\tC{{\widetilde C}}
\def\bbt{b}
\def\tr{\tilde r}
\def\cof{\partial}
\def\Blm{\left|}
\def\Brm{\right|}
\def\aaa{{\mathbb A}}
\def\tiNC{\t_1\uN(C)}
\def\tiNCd{\t_1\uN(C')}
\begin{document}

\title{A law of large numbers approximation for Markov population
processes with countably many types}
\author{
A. D. Barbour\footnote{Angewandte Mathematik, Universit\"at Z\"urich,
Winterthurertrasse 190, CH-8057 Z\"URICH;
ADB was supported in part by Schweizerischer Nationalfonds Projekt Nr.\
20--107935/1.\msk}
\ and
M. J. Luczak\footnote{London School of Economics;
MJL was supported in part by a STICERD grant.
}\\
Universit\"at Z\"urich and London School of Economics
}

\date{}
\maketitle

\begin{abstract}
When modelling metapopulation dynamics, the influence of a single patch
on the metapopulation depends on the number of individuals in the patch.
Since the population size has no natural upper limit, this leads to
systems in which there are countably infinitely many possible types of
individual.  Analogous considerations apply in the transmission of
parasitic diseases.  In this paper, we prove a law of large numbers for
quite general systems of this kind, together with a rather sharp
bound on the rate
of convergence in an appropriately chosen weighted $\ell_1$ norm.
\end{abstract}

 \noindent
{\it Keywords:} Epidemic models, metapopulation processes, countably many types,
  quantitative law of large numbers, Markov population processes \\
{\it AMS subject classification:} 92D30, 60J27, 60B12 \\
{\it Running head:}  A law of large numbers approximation

\section{Introduction}\label{introduction}
\setcounter{equation}{0}
There are many biological systems that consist of entities that
differ in their influence according to the number of active elements
associated with them, and can be divided into types accordingly.  
In parasitic diseases (Barbour \& Kafetzaki~1993,
Luchsinger~2001a,b, Kretzschmar~1993), the infectivity of a host depends on
the number of parasites that it carries; in metapopulations, the migration
pressure exerted by a patch is related to the number of its inhabitants
(Arrigoni~2003); the behaviour of a cell may depend on the number of copies
of a particular gene that it contains (Kimmel \& Axelrod~2002, Chapter~7); 
and so on.
In none of these examples is there a natural upper limit to the number of
associated elements, so that the natural setting for a mathematical model
is one in which there are countably infinitely many possible types of
individual.  In addition, transition rates typically increase with the
number of associated elements in the system --- for instance, each
parasite has an individual death rate, so that the overall death rate
of parasites grows at least as fast as the number of parasites --- and
this leads to processes with unbounded transition rates.  This paper is
concerned with approximations to density dependent
Markov models of this kind, when the typical population size~$N$ becomes
large.

In density dependent Markov population processes with only finitely many
types of individual, a law of large numbers approximation, in the form of
a system of ordinary differential equations, was established by Kurtz~(1970),
together with a diffusion approximation (Kurtz, 1971).  In the infinite
dimensional case, the law of large numbers was proved for some specific
models (Barbour \& Kafetzaki~1993, Luchsinger~2001b, Arrigoni~2003, 
see also L\'eonard~1990), using individually tailored methods.  A more general 
result was then given by Eibeck \& Wagner (2003).
In Barbour \& Luczak~(2008), the law of large numbers was strengthened by the addition of 
an error bound in~$\ell_1$ that is close to optimal order in~$N$.
Their argument makes use of an intermediate approximation involving an
independent particles process, for which the law of large numbers is
relatively easy to analyse.  This process is then shown to be sufficiently
close to the interacting process of actual interest, by means of a coupling
argument.  However, the generality of the results obtained is limited by
the simple structure of the intermediate process, and the model of
Arrigoni~(2003), for instance, lies outside their scope.

In this paper, we develop an entirely different approach, which circumvents
the need for an intermediate approximation, enabling a much wider class of
models to be addressed. The setting is that of families of Markov population 
processes $X_N := (X_N(t),\,t\ge0)$, $N\ge1$, taking values in the
countable space $\xx_+ := \{X \in \Zp^{\Zp};\, \smo X^m < \infty\}$.
Each component represents the number of individuals of a particular type,
and there are countably many types possible; however, at any given time,
there are only finitely many individuals in the system.  The process evolves
as a Markov process with state-dependent transitions
\eq\label{1.0}
    X \ \to\ X + J \quad\mbox{at rate}\quad N\a_J(N^{-1}X),\qquad X \in \xx_+,\ J \in \jj,
\en
where each jump is of bounded influence, in the sense that
\eq\label{finite-jumps}
   \jj \subset \{X \in \Z^{\Zp};\, \smo |X^m| \le J_* < \infty\},
     \quad\mbox{for some fixed}\quad J_* < \infty, 
\en
so that the number of individuals affected is uniformly bounded.  Density
dependence is reflected in the fact that the arguments of the functions~$\a_J$
are counts normalised by the `typical size'~$N$.
Writing $\rr := \R_+^{\Zp}$, the
functions~$\a_J\colon\, \rr  \to \R_+$ are assumed to satisfy
\eq\label{finite-rates}
    \sjj \a_J(\x) \ < \ \infty,\qquad \x \in \rr_0,
\en
where $\rr_0 := \{\x \in \rr\colon\, \x_i = 0 {\mbox{ for all but finitely many }} i\}$;
this assumption implies that the processes~$X_N$ are pure jump processes, at least
for some non-zero length of time.  To prevent
the paths leaving~$\xx_+$, we also assume that $J_l \ge -1$ for each~$l$, and
that $\a_J(\x) = 0$ if $\x^l = 0$ for any $J\in\jj$ such that $J^l = -1$.
Some remarks on the consequences of allowing transitions~$J$ with $J^l \le -2$
for some~$l$ are made at the end of Section~\ref{approximation}.

The law of large numbers is then formally expressed in terms of the 
system of {\it deterministic equations\/}
\eq\label{determ}
   \frac{d\x}{dt} \Eq \sjj J\a_J(\x) \ =:\ F_0(\x),
\en
to be understood componentwise for those $\x\in\rr$ such that
\[
    \sjj |J^l| \a_J(\x) \ <\ \infty,\quad \mbox{for all } l \ge 0,
\]
thus by assumption including~$\rr_0$.  Here, the quantity~$F_0$ represents
the infinitesimal average drift of the components of the random process. 
However, in this generality,
it is not even immediately clear that equations~\Ref{determ} have a solution.

In order to make progress, it is assumed that the unbounded
components in the transition rates can be assimilated into a
linear part, in the sense that~$F_0$ can be written in the form
\eq\label{F-assn}
   F_0(\x) \Eq A\x + F(\x),
\en
again to be understood componentwise,  where~$A$ is a constant $\Zp\times\Zp$ matrix.
These equations are then treated as a perturbed linear system  (Pazy~1983, Chapter~6). 
Under suitable assumptions on $A$, there exists a measure~$\m$ on~$\Zp$, 
defining a weighted $\ell_1$ norm $\| \cdot \|_{\mu}$ on ${\mathcal R}$,  
and a strongly
$\| \cdot \|_{\mu}$--continuous semigroup $\{R(t),\,t\ge0\}$ of transition matrices 
having pointwise derivative $R'(0) = A$.  
If $F$ is locally $\| \cdot \|_{\mu}$--Lipschitz and 
$\nm{x(0)} < \infty$, this suggests using the solution~$x$ of the integral equation
\eq\label{mild-solution}
   x(t) \Eq R(t)x(0) + \int_0^t R(t-s)F(x(s))\,ds
\en
as an approximation to~$x_N := N^{-1}X_N$, instead of solving the deterministic 
equations~\Ref{determ} directly. We go on to
show that the solution~$X_N$ of the stochastic system can be expressed using  
a formula similar to~\Ref{mild-solution}, which has an additional stochastic component in the
perturbation: 
\eq\label{stochastic-solution}
   x_N(t) \Eq R(t)x_N(0) + \int_0^t R(t-s)F(x_N(s))\,ds + \tm_N(t),
\en
where
\eq\label{stochastic-perturbation}
   \tm_N(t) \Def \int_0^t R(t-s)\,dm_N(s),
\en
and~$m_N$ is the local martingale given by
\eq\label{local-mN}
  m_N(t) \Def x_N(t) - x_N(0) - \int_0^t F_0(x_N(s))\,ds.
\en
The quantity $m_N$ can be expected to be small, at least componentwise, under
reasonable conditions.

To obtain tight control over~$\tm_N$ in all components simultaneously,
sufficient to ensure that $\sup_{0\le s\le t}\nm{\tm_N(s)}$ is small, we 
derive Chernoff--like bounds on the deviations of the most significant
components, with the help of
a family of exponential martingales.  The remaining components are
treated using some general {\it a priori\/} bounds on the behaviour of
the stochastic system.  This allows us to take the difference between the
stochastic and deterministic equations \Ref{stochastic-solution} 
and~\Ref{mild-solution}, after which a Gronwall argument can be carried
through, leading to the desired approximation.

The main result, Theorem~\ref{main-theorem}, guarantees an approximation error of
order $O(N^{-1/2}\sqrt{\log N})$ in the weighted $\ell_1$ metric $\nm{\cdot}$, 
except on an
event of probability of order $O(N^{-1}\log N)$.  More precisely, for each $T > 0$, 
there exist constants $K_T^{(1)}, K_T^{(2)}, K_T^{(3)}$ such 
that, for $N$ large enough, if  
$$
  \|N^{-1} X_N(0) - x(0)\|_{\mu} \le K_T^{(1)} \sqrt{\frac{\log N}{N}},
$$
then
\begin{equation}
\label{main-result-1}
    \pr \Big  ( \sup_{0 \le t \le T} \|N^{-1} X_N (t) - x(t) \|_{\mu} 
      > K_T^{(2)} \sqrt{\frac{\log N}{N}}   \Big ) \le K^{(3)}_T \frac{\log N}{N}.
\end{equation}
The error bound is sharper, by a factor of $\log N$, than that given in
Barbour \& Luczak~(2008), and the theorem is applicable to a much wider
class of models.  However, the method of proof involves moment arguments,
which require somewhat stronger assumptions on the initial state of the
system, and, in models such as that of Barbour \& Kafetzaki~(1993), on the choice
of infection distributions allowed. The conditions under which
the theorem holds can be divided into three categories: growth conditions
on the transition rates, so that the {\it a priori\/} bounds, which have
the character of moment bounds, can be established; conditions on the matrix~$A$,
sufficient to limit the growth of the semigroup~$R$, and (together with the 
properties of~$F$) to determine the weights
defining the metric in which the approximation is to be carried out; and conditions
on the initial state of the system. The {\it a priori\/} bounds are derived in
Section~\ref{a-priori}, the semigroup analysis is conducted in 
Section~\ref{semigroup-section}, and
the approximation proper is carried out in Section~\ref{approximation}.  The paper
concludes in Section~\ref{examples} with some examples.

The form~\Ref{stochastic-perturbation} of the stochastic component~$\tm_N(t)$
in~\Ref{stochastic-solution} is very similar to that of a key element in the
analysis of stochastic partial differential equations; see, for example,
Chow~(2007, Section~6.6).  The SPDE 
arguments used for its control are however typically conducted in a Hilbert 
space context. Our setting is quite different in nature, and it does not seem 
clear how to translate the SPDE methods into our context.

\section{A priori bounds}\label{a-priori}
\setcounter{equation}{0}
We begin by imposing further conditions on the transition rates of the
process~$X_N$, sufficient to constrain its paths to bounded subsets
of~$\xx_+$ during finite time intervals, and in particular to ensure 
that only finitely many jumps
can occur in finite time.  The conditions that follow have 
the flavour of moment
conditions on the jump distributions.  Since the index~$j\in\integ_+$
is symbolic in nature, we start by fixing
an $\n\in\rr$, such that~$\n(j)$ reflects in some sense the `size' of~$j$,
with most indices being `large': 
\eq\label{nu-limits}
  \n(j) \ge 1 \ \mbox{for all}\ j\ge0 \quad\mbox{and}\quad
      \lim_{j\to\infty}\n(j) = \infty.  
\en
We then define the analogues of higher empirical moments using the quantities 
$\n_r\in\rr$, defined by $\n_r(j) := \n(j)^r$, $r\ge0$, setting 
\eq\label{S-def}
   S_r(x) \Def \sjo \n_r(j)x^j \Eq x^T\n_r,\qquad x \in \rr_0,
\en
where, for $x \in \rr_0$ and $y \in \rr$, $x^Ty := \slo x_ly_l$. 
In particular, for $X \in \xx_+$, $S_0(X) = \|X\|_1$.
Note that, because of~\Ref{nu-limits}, for any $r\ge1$, 
\eq\label{Sr-limits}
\#\{X \in \xx_+\colon\,S_r(X)\le K\} < \infty \quad\mbox{for all}\quad K > 0.  
\en
To formulate the conditions that limit the growth of the empirical
moments of~$X_N(t)$ with~$t$, we also define
\eq\label{UV-defs}
    U_{r}(x) \Def \sjj \a_J(x) J^T \n_r; \quad
    V_{r}(x) \Def \sjj \a_J(x) (J^T \n_r)^2, \quad x\in\rr. \non 
\en
The assumptions that we shall need are then as follows.  

\begin{assumption}\label{ap-assns}
There exists a~$\nu$ satisfying~\Ref{nu-limits} and $\rmax\ui, \rmax\ut \ge 1$ 
such that, for all~$X\in\xx_+$, 
\eqa
     &&\sjj \a_J(N^{-1}X) |J^T \n_r| \ <\ \infty,\qquad 0\le r\le \rmax\ui,
       \label{3.5a} 
\ena
the case $r=0$ following from \Ref{finite-jumps} and~\Ref{finite-rates};
furthermore, for some non-negative constants $k_{rl}$, the inequalities
\eqa   
    U_{0}(x) &\le&  k_{01} S_0(x) + k_{04}, \non \\
    U_{1}(x) &\le&  k_{11} S_1(x) + k_{14},  \label{U-bnd} \\
    U_{r}(x) &\le&  \{k_{r1} + k_{r2} S_0(x)\} S_r(x) + k_{r4},
    \quad 2\le r\le \rmax\ui; \non
\ena
and
\eqa
    V_{0}(x) &\le& k_{03}S_1(x) + k_{05},\non\\
    V_{r}(x) &\le& k_{r3} S_{p(r)}(x) + k_{r5}, 
       \qquad 1\le r\le \rmax\ut,\label{V-bnd}
\ena
are satisfied,
where $1\le p(r) \le \rmax\ui$ for $1\le r\le \rmax\ut$.
\end{assumption}

\nin The quantities $\rmax\ui$ and~$\rmax\ut$ usually need to be reasonably 
large, if Assumption~\ref{zeta-assns} below is to be satisfied.

\medskip
Now, for~$X_N$ as in the introduction, we 
let $\txnn$ denote the time of its \hbox{$n$-th} jump, with $t^{X_N}_0 = 0$,
and set $\txin := \lnti \txnn$, possibly infinite. 
For $0 \le t < \txin$, we define
\eq\label{SUV-N-defs}
   S_r\uN(t) \Def S_r(X_N(t));\quad U_r\uN(t) \Def U_{r}(x_N(t));
     \quad V_r\uN(t) \Def V_{r}(x_N(t)), 
\en
once again with $x_N(t) := N^{-1}X_N(t)$, and also
\eq\label{tau-defs}
  \t_r\uN(C) \Def \inf\{t < \txin\colon\,S_r\uN(t) \ge NC\},\qquad r\ge0,
\en
where the infimum of the empty set is taken to be~$\infty$.
Our first result shows that $\txin = \infty$ a.s., and
limits the expectations of $S_0\uN(t)$
and~$S_1\uN(t)$ for any fixed~$t$.

In what follows, we shall write ${\mathcal F}_s^{(N)}= \sigma (X_N (u), 0 \le u \le s)$, 
so that $({\mathcal F}^{(N)}_s: s \ge 0)$ is the natural filtration of the process $X_N$.

\begin{lemma}\label{AP-0}
Under Assumptions~\ref{ap-assns}, $\txin = \infty$ a.s. Furthermore, for any $t\ge0$,
\eqs
   \ex\{S_0\uN(t)\} &\le& (S_0\uN(0) + N k_{04}t)  e^{k_{01}t};\\
   \ex\{S_1\uN(t)\} &\le& (S_1\uN(0) + N k_{14}t)  e^{k_{11}t}.
\ens
\end{lemma}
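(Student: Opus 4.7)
The plan is to obtain the two moment bounds via Dynkin's formula applied to $S_r$ ($r=0,1$), stopped at the $n$th jump time $\txnn$, and then to leverage these bounds to rule out explosion. Since $\sjj N\a_J(N^{-1}X)\,J^T\n_r = NU_r(x_N)$ is well-defined under~\Ref{3.5a}, the quantity
\[
 M_r\uN(t) \Def S_r\uN(t\wedge\txnn) - S_r\uN(0) - \int_0^{t\wedge\txnn} NU_r(x_N(s))\,ds
\]
is a local martingale. For $r=0$, $|J^T\n_0| \le J_*$, so $S_0\uN(t\wedge\txnn) \le S_0\uN(0) + nJ_*$, and $M_0\uN$ is a true (bounded) martingale; for $r=1$ I would add a further localization $\s_k := \inf\{t: S_1\uN(t)\ge k\}$, yielding a bounded martingale on $[0,\txnn\wedge\s_k]$, followed by a Fatou passage $k\to\infty$. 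Taking expectations and substituting the bounds $U_0(x) \le k_{01}S_0(x)+k_{04}$ and $U_1(x) \le k_{11}S_1(x)+k_{14}$ from~\Ref{U-bnd}, noting $NS_r(x_N) = S_r\uN$, leads to
\[
 \ex S_r\uN(t\wedge\txnn) \Le S_r\uN(0) + \int_0^t \bigl\{k_{r1}\ex S_r\uN(s\wedge\txnn) + Nk_{r4}\bigr\}\,ds,
\]
and Gronwall's inequality produces the stated bounds uniformly in $n$.

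To prove $\txin=\infty$ a.s., pick $K>0$ and use that $S_1\uN(\t_1\uN(K)) \ge NK$ on $\{\t_1\uN(K)<\infty\}$. Running the preceding argument with $\txnn\wedge\t_1\uN(K)$ in place of $\txnn$ supplies the same Gronwall bound for $\ex S_1\uN(t\wedge\txnn\wedge\t_1\uN(K))$, so Markov's inequality gives
\[
 \pr\bigl(\t_1\uN(K) \le t\wedge\txnn\bigr) \Le \frac{(S_1\uN(0)+Nk_{14}t)e^{k_{11}t}}{NK},
\]
uniformly in $n$; monotone convergence in $n$ extends this to $\pr(\t_1\uN(K)\le t\wedge\txin)$. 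On the complementary event $\{\t_1\uN(K)>t\wedge\txin\}\cap\{\txin\le t\}$ the trajectory would remain on $[0,\txin)$ inside $\{X:S_1(X)<NK\}$, a finite set because $\n(j)\to\infty$; the total jump rate on this set is uniformly bounded (finite at each state by~\Ref{finite-rates}, and the set is finite), ruling out infinitely many jumps in finite time. Hence $\pr(\txin\le t) \le \pr(\t_1\uN(K)\le t\wedge\txin) \to 0$ as $K\to\infty$, so $\txin = \infty$ a.s. A final Fatou passage $n\to\infty$ in the stopped-moment bounds delivers the statement.

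The main obstacle is juggling two distinct sources of unboundedness: individual jumps of $S_1$ need not be bounded in size (since $\n$ is unbounded and a jump may involve an individual of arbitrarily large type), and the total jump rate is not uniformly bounded on $\xx_+$. The two auxiliary localizations $\s_k$ and $\t_1\uN(K)$ decouple these issues --- $\s_k$ securing the martingale property and $\t_1\uN(K)$ supplying the quantitative leverage for non-explosion --- and some care is needed with the convention defining $\t_1\uN(K)$ on $\{\txin<\infty\}$, so that the implication ``$\t_1\uN(K)>t\wedge\txin$ and $\txin\le t$ imply the trajectory stays in a finite state set up to $\txin$'' is clean.
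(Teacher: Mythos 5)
Your proof follows essentially the same strategy as the paper's: build the local martingale $M_r\uN$, localize so that taking expectations is legitimate, apply the bound $U_r(x) \le k_{r1}S_r(x) + k_{r4}$ and Gronwall to obtain a moment bound uniform in the localization parameter, then argue that bounded $S_1\uN$ traps the process in a finite state set where, by~\Ref{finite-rates}, rates are bounded and explosion is impossible. The only difference is cosmetic: the paper localizes directly at $\t_1\uN(C)$ (your $\s_k$), invoking~\Ref{Sr-limits}, \Ref{3.5a} and~\Ref{U-bnd} to assert the stopped process is a genuine martingale, whereas you first stop at the $n$th jump time $\txnn$ (and for $r=1$ add $\s_k$ on top, since $S_1\uN$ can have unbounded jump increments even after finitely many jumps) and then pass to the limit by Fatou/monotone convergence. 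Your version makes explicit why two separate localizations are convenient---one controlling the number of jumps, the other the jump magnitudes of $S_1\uN$---which the paper compresses into a one-line claim; the substance is the same.
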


\bp
Introducing the formal generator $\aaa_N$ associated with~\Ref{1.0},
\eq\label{A-gen-def}
  \aaa_N f(X) \Def N\sjj \a_J(N^{-1}X)\{f(X+J) - f(X)\}, \qquad X\in\xx_+,
\en
we note that $NU_l(x) = \aaa_N S_l(Nx)$. Hence, if we define~$M_l\uN$ by
\eq\label{M0-def}
     M_l\uN(t) \Def S_l\uN(t) - S_l\uN(0)  - N\int_0^t U_l\uN(u) \,du,\qquad t\ge0,
\en
for $0\le l\le \rmax\ui$, it is immediate from \Ref{Sr-limits}, \Ref{3.5a} and~\Ref{U-bnd}
that the process $(M_l\uN(t\wedge\tiNC),\,t\ge0)$ is a zero mean $\ff\uN$--martingale 
for each~$C>0$.
%
\ignore{
Defining $N(s) := \inf\{m\colon\,t^{X_N}_m \ge s\}$, 
and using the martingale and Markov properties, it follows that
\eqs
   \ex\{M_l\uN(t^{X_N}_{N(s)}) I[N(s)\le n] \}
   &=& \ex\Blb \ex\{M_l\uN(t^{X_N}_{N(s)}) I[N(s)\le n] \giv \ff_s^{(N)} \} \Brb \\
   &=& \ex\{M_l\uN(s) I[N(s) \le n] \}.
\ens
Applying the optional stopping theorem to the index $(N(s)\wedge n)$, we have that
$$\ex \{M_l^{(N)} (t^{X_N}_{N(s) \wedge n}) \} = 0,$$
in other words,
$$\ex \{M_l^{(N)} (t^{X_N}_{N(s)}) I [N(s) \le n] \} +  \ex \{M_l^{(N)} (t^{X_N}_n) I [N(s) > n] \}= 0.$$
But $N(s) \le n$ if and only if $t^{X_N}_n \ge s$, so 
we deduce that 
\eq\label{stopped-zero}
   \ex\{M_l\uN(s\wedge\txnn)\} \Eq 0,
\en
for any $0\le s\le T$ and any $n\ge1$.
}
In particular, considering $M_1\uN(t\wedge\tiNC)$, it follows in view of~\Ref{U-bnd} that
\eqa
  \ex\{S_1\uN(t\wedge\tiNC)\}  &\le& S_1\uN(0)  
     + \ex\Blb \int_0^{t\wedge\tiNC} \{k_{11} S_1\uN(u) + Nk_{14}\} \,du \Brb \non\\
   &\le& S_1\uN(0)  
     +  \int_0^t (k_{11} \ex\{S_1\uN(u\wedge\tiNC)\} + Nk_{14}) \,du .\non
\ena
Using Gronwall's inequality, we deduce that
\eq\label{S1C-ex-bnd}
    \ex\{S_1\uN(t\wedge\tiNC)\} \Le (S_1\uN(0) + Nk_{14}t)e^{k_{11}t},
\en
uniformly in $C>0$, and hence that
\eq\label{S1C-prob-bnd}
  \pr\Bigl[\sup_{0\le s\le t} S_1(X_N(s)) \ge NC\Bigr] 
      \Le C^{-1}(S_1(x_N(0)) + k_{14}t)e^{k_{11}t}
\en
also.  Hence $\sup_{0\le s\le t} S_1(X_N(s)) < \infty$ a.s.\ for any~$t$, 
$\lim_{C\to\infty}\tiNC = \infty$ a.s., and,
from \Ref{Sr-limits} and~\Ref{finite-rates}, it thus follows that $\txin = \infty$ a.s.
The bound on $\ex\{S_1\uN(t)\}$ is now immediate, and that on $\ex\{S_0\uN(t)\}$
follows by applying the same Gronwall argument to $M_0\uN(t\wedge\tiNC)$.
\ignore{
For the bounds on the expectations, we use~\Ref{U-bnd}, together with the
martingales $(M_l\uN(t\wedge\txnn),\,t\ge0)$, $l=1,2$, to give
\eqa
  \ex\{S_l\uN(t)\bone_{[t,\infty)}(\txnn)\} &\le& \ex\{S_l\uN(t\wedge\txnn)\} 
      \label{bone-to-wedge} \\
}
\ignore{
  &\le& S_l\uN(0)  
     + \ex\Blb \int_0^{t\wedge\txnn} \{k_{l1} S_l\uN(u) + Nk_{l4}\} \,du \Brb \non\\
}
\ignore{
   &\le& S_l\uN(0)  
     +  \int_0^t (k_{l1} \ex\{S_l\uN(u\wedge\txnn)\} + Nk_{l4}) \,du ,\label{bone-2}
\ena
much as before. Gronwall's inequality now implies that
\eq\label{Esln-bnd}
    \ex\{S_l\uN(t\wedge\txnn)\} \Le (S_l\uN(0) + Nk_{l4}t)e^{k_{l1}t},\quad l=0,1,
\en
for any~$n$, and the  lemma follows from~\Ref{bone-to-wedge}, by monotone 
convergence applied to the variables $S_l\uN(t)\bone_{[t,\infty)}(\txnn)$.
}
\ep


\bigskip
The next lemma shows that, if any $T>0$ is fixed and~$C$ is chosen large enough, 
then,  with high probability,
$N^{-1}S_0\uN(t) \le C$ holds for all $0\le t\le T$.

\begin{lemma}\label{AP-1}
Assume that Assumptions~\ref{ap-assns} are satisfied, and that
$S_0\uN(0) \le NC_0$ and $S_1\uN(0) \le NC_1$.  Then, for any $C
\ge 2(C_0 + k_{04}T)e^{k_{01}T}$, we have
\[
      \pr\bigl[\{\t_0\uN(C)\le T\}\bigr] \Le (C_1\vee1)K_{00}/(NC^2),
\]
where $K_{00}$ depends on~$T$ and the parameters of the model.  
\end{lemma}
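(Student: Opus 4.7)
\medskip
\noindent\textbf{Proof proposal.} The plan is to localise at $\t_0:=\t_0\uN(C)$, combine the drift bound on $U_0$ with Gronwall's inequality to reduce the event $\{\t_0\le T\}$ to a deviation estimate on the martingale $M_0\uN$ of~\Ref{M0-def}, and then control that martingale through Doob's $L^2$ maximal inequality, the needed bound on its predictable quadratic variation coming from~\Ref{V-bnd} and the first-moment estimate of Lemma~\ref{AP-0}.

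Writing $g(t):=S_0\uN(t\wedge\t_0)$ and $M^*(T):=\sup_{0\le s\le T}|M_0\uN(s\wedge\t_0)|$, the identity~\Ref{M0-def} (with $l=0$) together with the first inequality of~\Ref{U-bnd} and the scaling $NS_0(x_N(u))=g(u)$ for $u\le\t_0$ gives, for $0\le t\le T$,
$$
   g(t) \Le NC_0 + k_{01}\int_0^t g(u)\,du + Nk_{04}T + M^*(T).
$$
Gronwall's inequality then produces $g(T)\le \bigl\{N(C_0+k_{04}T)+M^*(T)\bigr\}e^{k_{01}T}$. On $\{\t_0\le T\}$ one has $g(\t_0)\ge NC$, so the standing hypothesis $C\ge 2(C_0+k_{04}T)e^{k_{01}T}$ forces $M^*(T)\ge \half NC\,e^{-k_{01}T}$.

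For the second step I would note that on the stochastic interval $[0,\t_0]$ the process $X_N$ is confined to the finite subset $\{X\in\xx_+\colon S_0(X)\le NC+J_*\}$, on which total jump rates are bounded thanks to~\Ref{finite-rates}; hence $M_0\uN(\cdot\wedge\t_0)$ is a square-integrable martingale whose predictable quadratic variation equals $N\int_0^{t\wedge\t_0}V_0(x_N(u))\,du$ (by the standard computation $\aaa_N(S_0^2)-2S_0\aaa_N S_0 = NV_0(x_N)$). The bound~\Ref{V-bnd} on $V_0$, together with $\ex\{S_1\uN(u)\}\le N(C_1+k_{14}T)e^{k_{11}T}$ supplied by Lemma~\ref{AP-0}, gives
$$
   \ex\bigl\{\langle M_0\uN\rangle_{T\wedge\t_0}\bigr\} \Le NK'(C_1\vee 1)
$$
for a constant $K'$ depending only on $T$ and the model parameters, and Doob's $L^2$ inequality then yields $\ex\{M^*(T)^2\}\le 4NK'(C_1\vee 1)$.

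Assembling via Chebyshev,
$$
   \pr\bigl[\t_0\le T\bigr] \Le \pr\bigl[M^*(T)\ge \half NC\,e^{-k_{01}T}\bigr]
   \Le \frac{16e^{2k_{01}T}K'(C_1\vee 1)}{NC^2},
$$
which is the required estimate with $K_{00}:=16K'e^{2k_{01}T}$. No step presents a substantial obstacle; the main care is in tracking the scaling between $S_l\uN$ and $S_l(x_N)$ when invoking Gronwall, and in justifying that the stopped martingale really lies in $L^2$ so that Doob's inequality applies.
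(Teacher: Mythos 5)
Your strategy---Gronwall on the drift to reduce $\{\t_0\uN(C)\le T\}$ to a large deviation of $M_0\uN$, then a quadratic-variation bound and Doob's inequality---is precisely the paper's approach. There is, however, a genuine gap in your justification of square-integrability. You claim the process is confined on $[0,\t_0\uN(C)]$ to the ``finite subset'' $\{X\in\xx_+\colon S_0(X)\le NC+J_*\}$. That set is not finite: $S_0(X)=\|X\|_1$ bounds the total count of individuals but places no restriction on which indices carry it. The finiteness statement~\Ref{Sr-limits} holds only for $r\ge1$, because it relies on $\n(j)\to\infty$; the level sets of $S_0$ escape it. As a result, \Ref{finite-rates} (which asserts only pointwise finiteness of $\sum_{J}\a_J(\xi)$ for each fixed $\xi$) does not give uniformly bounded rates on this level set, so your stated reason for $M_0\uN(\cdot\wedge\t_0\uN(C))$ to be a square-integrable martingale does not stand.

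The paper avoids this by localising with $\tiNCd$ rather than $\t_0\uN(C)$: the sets $\{X\colon S_1(X)\le NC'\}$ \emph{are} finite by~\Ref{Sr-limits}, so the stopped process runs through a finite state space with bounded rates, the quadratic-variation identity~\Ref{variance-MG} is legitimate, and Doob applies to $M_0\uN(\cdot\wedge\tiNCd)$. The resulting probability bound is uniform in $C'$, and since Lemma~\ref{AP-0} shows $\tiNCd\to\infty$ a.s., one lets $C'\to\infty$ at the end. Your argument repairs the same way: replace $\t_0\uN(C)$ by $\tiNCd$ (or by $\t_0\uN(C)\wedge\tiNCd$) as the localising time, keep the Gronwall step on the unstopped $S_0\uN$ (as in~\Ref{S0-bnd}, where $\sup_{0\le u\le t}M_0\uN(u)$ enters as a nondecreasing inhomogeneity), and pass to the limit in $C'$. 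Everything else you wrote---the Gronwall reduction, the use of~\Ref{V-bnd} together with the Lemma~\ref{AP-0} bound on $\ex S_1\uN(u)$, and the final Chebyshev step---matches the paper's proof.
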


\bp
It is immediate from \Ref{M0-def} and~\Ref{U-bnd} that
\eqa
   S_0\uN(t) &=& S_0\uN(0)  + N\int_0^t U_0\uN(u) \,du + M_0\uN(t)
         \phantom{HHHHHHHHHHH}\non\\
     &\le&  S_0\uN(0) + \int_0^t (k_{01} S_0\uN(u) + Nk_{04})  \,du 
         + \sup_{0\le u\le t} M_0\uN(u).   \label{S-0-bnd}
\ena
Hence, from Gronwall's inequality, if $S_0\uN(0) \le NC_0$, then
\eq\label{S0-bnd}
   S_0\uN(t) \Le \left\{N(C_0 + k_{04}T) + \sup_{0\le u\le t}
        M_0\uN(u)\right\}e^{k_{01}t}.
\en

Now, considering the quadratic variation of~$M_0\uN$, we have
\eq\label{variance-MG}
   \ex\Blb \{M_0\uN(t\wedge\tiNCd)\}^2 - N\int_0^{t\wedge\tiNCd} V_0\uN(u)\,du \Brb
     \Eq 0
\en
for any $C' > 0$, from which it follows, much as above, that
\eqs
   \ex\Bl \{M_0\uN(t\wedge\tiNCd)\}^2 \Br &\le& \ex\Blb N\int_0^t V_0\uN(u\wedge\tiNCd)\,du \Brb\\
      &\le& \int_0^t \{k_{03}\ex S_1\uN(u\wedge\tiNCd) + Nk_{05}\}\,du. 
\ens
Using~\Ref{S1C-ex-bnd}, we thus find that
\eq\label{M0-1}
   \ex\Bl \{M_0\uN(t\wedge\tiNCd)\}^2 \Br 
     \Le \frac{k_{03}}{k_{11}}\, N(C_1 + k_{14}T)(e^{k_{11}t}-1) + Nk_{05}t, 
\en
uniformly for all~$C'$.%
\ignore{
Hence $\{M_0\uN(t\wtxnn),\, n\ge0\}$, is a square integrable
martingale, implying that the limit $M_0\uN(t\wtxin) := 
\lnti M_0\uN(t\wtxnn)$ is well
defined, and satisfies $\ex M_0\uN(t\wtxin) = 0$.  Then, in view of the
Markov property of~$X_N$, it also follows that, for any $s\le t$,
$$
  \ex \Blb M_0\uN(t\wtxin) \giv \ff\uN_s \Brb = M_0\uN(s\wtxin),
$$
where, as before, $\ff\uN_s := \s\{X_N(u),\, 0\le u\le s\}$, so that 
\hbox{$\{M_0\uN(s\wtxin),\, 0\le s \le t\}$} 
is also a square integrable martingale.  This in turn,
by the martingale convergence theorem, implies from~\Ref{S0-bnd}
that
\eq\label{S0-txi-bnd}
  \sup_{0\le u < t\wtxin} S_0\uN(u) \ <\ \infty
\en
for any~$t$.
Note also that $M_0\uN(\cdot)$ is linear on any interval $[\txn,t^X_{n+1})$,
from which it follows that
$$
  \sup_{0\le u\le T} M_0\uN(u\wtxi) \Eq \sup_{0\le n\le \infty} M_0\uN(T\wtxn).
$$ 
} 
 Doob's maximal inequality applied to $M_0\uN(t\wedge\tiNCd)$ now allows us to deduce that, 
for any $C',a>0$,
\eqs
   \lefteqn{\pr\Bigl[\sup_{0\le u\le T} M_0\uN(u\wedge\tiNCd) > aN \Bigr]}\\
      &&\qquad\Le \frac1{Na^2}\Blb\frac{k_{03}}{k_{11}}\,(C_1 + k_{14}T)\{e^{k_{11}T}-1\} 
   + k_{05}T\Brb \ =:\ \frac{C_1K_{01} + K_{02}}{Na^2},
\ens
say, so that, letting $C'\to\infty$, 
\[
   \pr\Bigl[\sup_{0\le u\le T} M_0\uN(u) > aN \Bigr] \Le \frac{C_1K_{01} + K_{02}}{Na^2}
\]
also.
Taking $a = \half Ce^{-k_{01}T}$ and putting the result into~\Ref{S0-bnd},
the lemma follows.
\ep

\bigskip
\ignore{
As a consequence, $S_0\uN(t) = \|X_N(t)\|_1$ is almost surely bounded 
up to~$\txin$ if $\txin < \infty$.  This does not of 
itself imply that the event $\txin < \infty$
has zero probability, because the transition rates out of
sets of states~$X$ with~$\|X\|_1$ bounded may still be unbounded.
}
In the next theorem, we 
control the `higher $\n$-moments' $S_r\uN(t)$ of~$X_N(t)$.  

\begin{theorem}\label{AP-2}
Assume that Assumptions~\ref{ap-assns} are satisfied, and  that 
$S_1\uN(0) \le NC_1$ and $S_{p(1)}\uN(0) \le NC'_{1}$. 
Then, for $2\le r \le \rmax\ui$ and for any $C>0$, we have
\eq\label{Sr-mean}
   \ex\{S_r\uN(t\wedge\t_0\uN(C))\} 
     \Le (S_r\uN(0) + Nk_{r4}t) e^{(k_{r1} + Ck_{r2})t},  \quad 0 \le t\le T.
\en

Furthermore, if for $1\le r\le \rmax\ut$, $S_r\uN(0) \le NC_r$ and $S_{p(r)}\uN(0) \le NC'_{r}$, 
then, for any $\gga \ge 1$,
\eq\label{Sr-prob}
   \pr\bigl[\sup_{0\le t\le T} S_r\uN(t\wedge\t_0\uN(C))
             \ge N\gga C''_{rT}\bigr]
    \Le K_{r0}\gga^{-2}N^{-1},
\en
where 
\[
   C''_{rT} \Def (C_r + k_{r4}T + \sqrt{(C'_{r}\vee1)}) e^{(k_{r1} + Ck_{r2})T}
\]
and $K_{r0}$ depends on $C,T$ and the parameters of the model.
\end{theorem}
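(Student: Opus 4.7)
The proof will closely follow the pattern of Lemmas~\ref{AP-0} and~\ref{AP-1}, now applied to the $r$-th moment~$S_r\uN$, with $\t_0\uN(C)$ playing the role of the key stopping time that enforces $S_0(x_N(u))\le C$. The central tool is the zero-mean martingale
\[
   M_r\uN(t) \Eq S_r\uN(t) - S_r\uN(0) - N\int_0^t U_r\uN(u)\,du,
\]
which, by Assumption~\ref{ap-assns} together with~\Ref{Sr-limits}, is a square-integrable $\ff\uN$--martingale once localized at $t\wedge\t_r\uN(C')$ for any~$C'>0$.

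For the mean bound~\Ref{Sr-mean}, the key observation is that on $\{u<\t_0\uN(C)\}$ one has $S_0(x_N(u))\le C$, so the third inequality in~\Ref{U-bnd} gives $NU_r\uN(u) \Le (k_{r1}+Ck_{r2})S_r\uN(u) + Nk_{r4}$. Taking expectations in the martingale identity, stopped at $t\wedge\t_0\uN(C)\wedge\t_r\uN(C')$, Gronwall's inequality yields
\[
   \ex\{S_r\uN(t\wedge\t_0\uN(C)\wedge\t_r\uN(C'))\} \Le (S_r\uN(0) + Nk_{r4}t)e^{(k_{r1}+Ck_{r2})t},
\]
uniformly in~$C'$; monotone convergence as $C'\to\infty$ delivers~\Ref{Sr-mean}.

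For the probability bound~\Ref{Sr-prob}, the argument of Lemma~\ref{AP-1} transfers with minor modifications. The quadratic variation identity gives
\[
   \ex\{M_r\uN(t\wedge\t_0\uN(C)\wedge\t_r\uN(C'))^2\} \Eq \ex\Bigl\{ N\int_0^{t\wedge\t_0\uN(C)\wedge\t_r\uN(C')} V_r\uN(u)\,du \Bigr\},
\]
which, by~\Ref{V-bnd}, is dominated by $\int_0^t\{k_{r3}\ex S_{p(r)}\uN(u\wedge\t_0\uN(C)) + Nk_{r5}\}\,du$. When $p(r)=1$, Lemma~\ref{AP-0} controls the integrand; when $p(r)\ge 2$, the already-proved~\Ref{Sr-mean} together with the hypothesis $S_{p(r)}\uN(0)\le NC'_r$ does so. In either case the estimate is at most $N(C'_r\vee1)K'_{rT}$, uniformly in~$C'$. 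Doob's $L^2$ maximal inequality, followed by $C'\to\infty$, then yields
\[
   \pr\Bigl[\sup_{0\le u\le T} |M_r\uN(u\wedge\t_0\uN(C))| > aN\Bigr] \Le \frac{(C'_r\vee1)K'_{rT}}{Na^2}.
\]
A pathwise Gronwall argument, exactly parallel to the derivation of~\Ref{S0-bnd}, gives
\[
   S_r\uN(t\wedge\t_0\uN(C)) \Le \Bigl[NC_r + Nk_{r4}t + \sup_{0\le u\le t} M_r\uN(u\wedge\t_0\uN(C))\Bigr] e^{(k_{r1}+Ck_{r2})t},
\]
and choosing $a = \gga\sqrt{C'_r\vee1}\,e^{-(k_{r1}+Ck_{r2})T}$ produces~\Ref{Sr-prob}.

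The main obstacle will be the careful bookkeeping of the double localization: a priori integrability of $S_r\uN$ at a deterministic time is not available, so every expectation and $L^2$ estimate must be established uniformly in the auxiliary stopping time~$\t_r\uN(C')$ and only then passed to the limit, exactly as was done with $\tiNCd$ in Lemma~\ref{AP-1}. A secondary bookkeeping point is the order in which the indices~$r$ are processed: the variance computation requires a bound on $\ex S_{p(r)}\uN$, supplied by~\Ref{Sr-mean} at the lower index $p(r)\ge 2$ or by Lemma~\ref{AP-0} when $p(r)=1$, so the mean estimate must be established across all admissible~$r$ before the variance estimates are undertaken.
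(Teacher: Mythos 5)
Your proposal follows the paper's proof essentially step by step: the martingale $M_r\uN$ stopped at $\t_0\uN(C)$ plus an auxiliary localizer, Gronwall for the mean bound, the quadratic variation identity combined with Doob's $L^2$ inequality for the fluctuation bound, and the pathwise Gronwall inequality to transfer the martingale bound to $S_r\uN$. The only substantive differences are cosmetic: the paper localizes with $\t_1\uN(C')$ (for which $\t_1\uN(C')\to\infty$ a.s.\ is immediate from Lemma~\ref{AP-0}) rather than your $\t_r\uN(C')$ (which also works since $\txin=\infty$ a.s.\ and only finitely many finite jumps occur on any bounded interval), and the paper passes to the limit in $C'$ by Fatou's lemma rather than monotone convergence --- the latter is not strictly applicable here because $S_r\uN(t\wedge\t_r\uN(C')\wedge\t_0\uN(C))$ is not monotone in $C'$, only a.s.\ convergent and nonnegative, so you should invoke Fatou at that step.
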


\bp
Recalling~\Ref{M0-def}, use the argument leading to~\Ref{S1C-ex-bnd} with
the martingales $M_r\uN(t\wedge\tiNCd\wedge\t_0\uN(C))$,
for any $C'>0$, to deduce that
\eqa
   \lefteqn{\ex S_r\uN(t\wedge\tiNCd\wedge\t_0\uN(C))} \non\\
   &\le& S_r\uN(0) + \int_0^t \Bl \{k_{r1} + C k_{r2}\}
          \ex\Blb S_r\uN(u\wedge\tiNCd\wedge\t_0\uN(C))\Brb + Nk_{r4}\Br \,du ,
   \non
\ena
for $1 \le r \le r_{\max}^{(1)}$, since $N^{-1}S_0\uN(u) \le C$ when $u\le \t_0\uN(C)$:
define $k_{12} = 0$. 
Gronwall's inequality now implies that
\eq
  \label{ESrn-bnd}
  \ex S_r\uN(t\wedge\tiNCd \wedge\t_0\uN(C)) \Le (S_r\uN(0) + Nk_{r4}t)e^{(k_{r1} + Ck_{r2})t},
\en
for $1\le r\le \rmax\ui$,
and \Ref{Sr-mean} follows by Fatou's lemma, on letting $C'\to\infty$.

Now, also from \Ref{M0-def} and~\Ref{U-bnd}, we have, for $t\ge0$ and each $r\le \rmax\ui$,
\eqa
   \lefteqn{S_r\uN(t\wedge\t_0\uN(C))} \non\\
    &=& S_r\uN(0)  + N\int_0^{t\wedge\t_0\uN(C)} U_r\uN(u) \,du 
     + M_r\uN(t\wedge\t_0\uN(C)) \non\\
     &\le&  S_r\uN(0) 
      + \int_0^t \Bl \{k_{r1} + Ck_{r2}\}S_r\uN(u\wedge\t_0\uN(C))+Nk_{r4}\Br \,du \non\\
    &&\qquad\mbox{} + \sup_{0\le u\le t} M_r\uN(u\wedge\t_0\uN(C)).  \non
\ena
Hence, from Gronwall's inequality, for all $t\ge0$ and $r\le \rmax\ui$, 
\eq\label{Sr-bnd}
   S_r\uN(t\wedge\t_0\uN(C)) \Le \Bigl\{N(C_r + k_{r4}t) + \sup_{0\le u\le t}
       M_r\uN(u\wedge\t_0\uN(C))\Bigr\}e^{(k_{r1} + Ck_{r2})t}.
\en

Now, as in~\Ref{variance-MG}, we have
\eq\label{variance-MG-r}
   \ex\Blb \{M_r\uN(t\wedge\tiNCd\wedge\t_0\uN(C))\}^2 -
      N\int_0^{t\wedge\tiNCd\wedge\t_0\uN(C)} V_r\uN(u)\,du \Brb
     \Eq 0,
\en
from which it follows, using~(\ref{V-bnd}), that, for $1 \le r \le r_{\max}^{(2)}$,
\eqs
   \lefteqn{\ex\Bl \{M_r\uN(t\wedge\tiNCd\wedge\t_0\uN(C))\}^2 \Br} \\
      &\le& \ex\Blb N\int_0^{t\wedge\tiNCd\wedge\t_0\uN(C))} V_r\uN(u)\,du \Brb\\
      &\le& \int_0^t \{k_{r3}\ex S_{p(r)}\uN(u\wedge\tiNCd\wedge\t_0\uN(C)) + Nk_{r5}\}\,du \\
      &\le& \frac{N(C'_r + k_{p(r),4}T)k_{r3}}{k_{p(r),1}+Ck_{p(r),2}}\, 
           (e^{(k_{p(r),1}+Ck_{p(r),2}t)}-1) + Nk_{r5}T, 
\ens
this last by~\Ref{ESrn-bnd}, since $p(r) \le r^{(1)}_{\max}$ for $1 \le r \le r^{(2)}_{\max}$.  
Using Doob's inequality, it follows that, for any~$a>0$,
\eqs
   \lefteqn{\pr\Bigl[\sup_{0\le u\le T} M_r\uN(u\wedge\t_0\uN(C)) > aN \Bigr]} \\
      &\le& \frac1{Na^2}\Blb\frac{k_{r3}(C'_r + k_{p(r),4}T)}{k_{p(r),1}+Ck_{p(r),2}}\, 
           (e^{(k_{p(r),1}+Ck_{p(r),2}T)}-1) + k_{r5}T\Brb \\ 
      &=:&  \frac{C'_rK_{r1} + K_{r2}}{Na^2}\,.
\ens
Taking $a = \gga\sqrt{(C'_r\vee1)}$ and putting the result into~\Ref{Sr-bnd} 
gives~\Ref{Sr-prob}, with $K_{r0} = (C'_r K_{r1} + K_{r2})/(C'_r \lor 1)$.
\ep

\nin Note also that $\sup_{0\le t\le T}S_r\uN(t) < \infty$ a.s.\ for 
all~$0 \le r \le \rmax\ut$, in view of Lemma~\ref{AP-1} and Theorem~\ref{AP-2}.

\bigskip
In what follows, we shall particularly need to control quantities of the form
$\sjj \a_J(x_N(s))d(J,\z)$,  where $x_N := N^{-1}X_N$ and 
\eq\label{djz-def}
   d(J,\z) \Def \sjo |J^j|\z(j),
\en
for $\z\in\rr$ chosen such that~$\z(j)\ge1$ grows fast enough with~$j$:
see~\Ref{zeta-cond}.  
Defining 
\eq\label{tau-az}
   \t\uN(a,\z) \Def \inf\Blb s\colon\, \sjj \a_J(x_N(s))d(J,\z) \ge a\Brb,
\en
infinite if there is no such~$s$, 
we show in the following corollary that, under suitable assumptions,
$\t\uN(a,\z)$ is rarely less than~$T$.

\begin{corollary}\label{tau-az-bnd}
Assume that Assumptions~\ref{ap-assns} hold, and that~$\z$ is such that
\eq\label{zeta-bnd}
   \sjj \a_J(N^{-1}X)d(J,\z) \Le \{k_1N^{-1}S_{r}(X) + k_2\}^{\bbt}
\en
for some $1\le r := r(\z) \le \rmax\ut$ and some $\bbt = \bbt(\z) \ge1$.
For this value of $r$, assume that  $S_{r}\uN(0) \le NC_{r}$ and 
$S_{p(r)}\uN(0) \le NC'_{r}$ for some constants $C_{r}$ and~$C'_{r}$.
Assume further that $S_0\uN(0) \le NC_0$, 
$S_1\uN(0) \le NC_1$ for some constants $C_0$, $C_1$, and define $C := 2(C_0 + k_{04}T)e^{k_{01}T}$.  Then
\[
   \pr[\t\uN(a,\z) \le T] \Le N^{-1}\{K_{r0}\gga_a^{-2} 
            + K_{00}(C_1\vee1)C^{-2}\},
\]
for any $a \ge \{k_2 + k_1 C''_{rT}\}^\bbt$,
where $\gga_a := (a^{1/\bbt} - k_2)/\{ k_{1}C''_{rT}\}$,  $K_{r0}$ and~$C''_{rT}$
are as in Theorem~\ref{AP-2}, and $K_{00}$ is as in Lemma~\ref{AP-1}. 
\end{corollary}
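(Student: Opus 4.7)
The plan is to reduce the event $\{\t\uN(a,\z) \le T\}$ to a deviation event for the moment process $S_r\uN$, and then invoke Theorem~\ref{AP-2} together with Lemma~\ref{AP-1}.

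First, I would note that if $\t\uN(a,\z) \le T$, then there exists some $s \le T$ at which $\sjj \a_J(x_N(s))\,d(J,\z) \ge a$, using right-continuity of $X_N$ to pass from the infimum in the definition \Ref{tau-az} to an actual attained value. By hypothesis~\Ref{zeta-bnd}, this forces $\{k_1 N^{-1}S_r(X_N(s)) + k_2\}^{\bbt} \ge a$, and since $\bbt \ge 1$, rearranging gives $N^{-1}S_r\uN(s) \ge (a^{1/\bbt} - k_2)/k_1 = \gga_a C''_{rT}$. Hence
\[
  \{\t\uN(a,\z) \le T\} \ \subset\ \Bigl\{\sup_{0\le s\le T} S_r\uN(s) \ge N \gga_a C''_{rT}\Bigr\}.
\]

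Next, I would split according to whether $\t_0\uN(C) \le T$ or not. On the event $\{\t_0\uN(C) > T\}$, the processes $S_r\uN(s)$ and $S_r\uN(s\wedge\t_0\uN(C))$ agree for all $s \le T$, so the intersection of the displayed event with $\{\t_0\uN(C)>T\}$ is contained in $\{\sup_{0\le s\le T} S_r\uN(s\wedge\t_0\uN(C)) \ge N\gga_a C''_{rT}\}$. The hypothesis $a \ge \{k_2 + k_1 C''_{rT}\}^{\bbt}$ is exactly what is needed to ensure $\gga_a \ge 1$, so Theorem~\ref{AP-2} (applied with the given~$r$) bounds the probability of this event by $K_{r0}\gga_a^{-2}/N$. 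Meanwhile, with the specified $C = 2(C_0+k_{04}T)e^{k_{01}T}$, Lemma~\ref{AP-1} bounds $\pr[\t_0\uN(C)\le T]$ by $K_{00}(C_1\vee1)/(NC^2)$. Adding these two contributions yields the claimed inequality.

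The argument is essentially bookkeeping on top of the two earlier estimates, so there is no real obstacle; the only points requiring mild care are the passage from the infimum defining $\t\uN(a,\z)$ to the existence of a witnessing time $s\le T$, and the algebraic verification that the threshold $N(a^{1/\bbt}-k_2)/k_1$ coincides with $N\gga_a C''_{rT}$ and that the assumption $a \ge \{k_2+k_1 C''_{rT}\}^{\bbt}$ is precisely the one making Theorem~\ref{AP-2} directly applicable (i.e.\ $\gga_a \ge 1$).
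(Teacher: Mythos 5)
Your proof is correct and follows exactly the same route as the paper's, which states tersely that ``Lemma~\ref{AP-1} and Theorem~\ref{AP-2} together bound this probability'' after reducing via~\Ref{zeta-bnd} to the event $\{\sup_{0\le t\le T}S_r\uN(t) \ge N(a^{1/\bbt}-k_2)/k_1\}$. You have simply made explicit the split on $\{\t_0\uN(C)\le T\}$, the identification of $(a^{1/\bbt}-k_2)/k_1$ with $\gga_a C''_{rT}$, and the role of the hypothesis $a \ge \{k_2+k_1C''_{rT}\}^\bbt$ in ensuring $\gga_a \ge 1$ so that Theorem~\ref{AP-2} applies; all of these are the right details and match the paper's intent.
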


\bp
In view of~\Ref{zeta-bnd}, it is enough to bound the probability
$$
   \pr[\sup_{0\le t\le T} S_r\uN(t) \ge N(a^{1/\bbt}-k_2)/k_1].
$$  
However,
Lemma~\ref{AP-1} and Theorem~\ref{AP-2} together bound this probability
by 
$$
  N^{-1}\Blb K_{r0}\gga_a^{-2} + K_{00}(C_1\vee1)C^{-2} \Brb,
$$
where~$\gga_a$ is as defined above, as long as $a^{1/\bbt}-k_2 \ge k_1 C''_{rT}$.
\ep

\medskip
If~\Ref{zeta-bnd} is satisfied, $\sjj \a_J(x_N(s))d(J,\z)$ is a.s.\ bounded on 
$0 \le s\le T$, because~$S_r\uN(s)$ is.  The corollary shows that the sum is then 
bounded by $\{k_2 + k_{1}C''_{r,T}\}^{\bbt}$, except on an event of probability 
of order~$O(N^{-1})$. Usually, one can choose $\bbt=1$.

\section{Semigroup properties}\label{semigroup-section}
\setcounter{equation}{0}
We make the following initial assumptions about the matrix~$A$: first,
that
\eq
   A_{ij} \ge 0 \ \mbox{for all}\ i \neq j \ge 0;\qquad
      \sjni A_{ji} < \infty\  \mbox{for all}\ i\ge0, \label{A-assn-1} 
\en
and then that, for some $\m \in \R_+^{\Zp}$ such that $\m(m) \ge 1$ for each
$m\ge0$, and for some $w\ge0$,
\eq\label{A-assn-2}
   A^T \m \Le w\m.
\en
We then use~$\m$ to define the $\m$-norm 
\eq\label{Rmu}
    \nm{\x} \Def \smo \m(m)|\x^m| \quad\mbox{on}\quad
    \rr_\m \Def \{\x\in\rr\colon\,\nm{\x} < \infty\}.
\en
Note that there may be many possible choices for~$\m$.  In what follows,
it is important that~$F$ be a Lipschitz operator with respect to the $\m$-norm,
and this has to be borne in mind when choosing~$\m$.

Setting 
\eq\label{Q-def}
   Q_{ij} \Def A^T_{ij}\m(j)/\m(i) - w\d_{ij},
\en
where~$\d$ is the Kronecker delta, we note that 
$Q_{ij} \ge 0$ for $i\ne j$, and that
\[
   0 \Le  \sum_{j\ne i} Q_{ij} \Eq \sum_{j\ne i} A^T_{ij}\m(j)/\m(i) 
              \Le w-A_{ii} \Eq -Q_{ii},
\]
using~\Ref{A-assn-2} for the inequality, so that $Q_{ii}\le 0$. 
Hence~$Q$ can be augmented to a
conservative $Q$--matrix, in the sense of Markov jump processes, by adding a coffin
state~$\cof$, and setting $Q_{i\cof} := - \sum_{j\ge0} Q_{ij} \ge 0$.  Let~$P(\cdot)$
denote the semigroup of Markov transition matrices corresponding to the minimal
process associated with~$Q$; then, in particular,
\eq\label{P-properties}
  Q \Eq P'(0) \quad\mbox{and}\quad P'(t) \Eq QP(t) \quad\mbox{for all}\ t\ge0
\en
(Reuter~1957, Theorem~3). 
Set 
\eq\label{R-def}
   R^T_{ij}(t) \Def e^{wt}\m(i)P_{ij}(t)/\m(j).
\en

\begin{theorem}\label{semigroup}
Let~$A$ satisfy Assumptions \Ref{A-assn-1} and~\Ref{A-assn-2}.  Then, with the
above definitions, $R$ is a strongly continuous semigroup on~$\rr_\m$, and
\eq\label{R-inequality}
   \sio \m(i)R_{ij}(t) \Le \m(j)e^{wt} \qquad \mbox{for all}\ j\ \mbox{and}\ t.
\en
Furthermore, the sums $\sjo R_{ij}(t)A_{jk} = (R(t)A)_{ik}$ are well defined
for all $i,k$, and 
\eq\label{A-properties}
  A \Eq R'(0) \quad\mbox{and}\quad R'(t) \Eq R(t)A \quad\mbox{for all}\ t\ge0.
\en
\end{theorem}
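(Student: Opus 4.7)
The plan is to read off all the required properties of $R$ from those of the sub-stochastic Markov semigroup $P$, via the explicit identity $R_{ij}(t) = e^{wt}\mu(j)P_{ji}(t)/\mu(i)$, which is simply the unpacking of the transpose in the definition \Ref{R-def}. Because the minimal process is sub-stochastic, $\sum_i P_{ji}(t) \le 1$ for each $j$, so
\[
\sum_i \mu(i)R_{ij}(t) \;=\; e^{wt}\mu(j)\sum_i P_{ji}(t) \;\le\; e^{wt}\mu(j),
\]
which is \Ref{R-inequality}; by Fubini applied to non-negative summands, this also yields the operator bound $\|R(t)\xi\|_\mu \le e^{wt}\|\xi\|_\mu$ on $\mathcal{R}_\mu$. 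The semigroup identity $R(t)R(s) = R(t+s)$ then falls out from direct substitution, cancellation of the $\mu$-factors across the dummy summation index, and Chapman--Kolmogorov for $P$.

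For strong continuity on $\mathcal{R}_\mu$, since finitely supported vectors are $\|\cdot\|_\mu$-dense in $\mathcal{R}_\mu$ and $\{R(t):0\le t\le 1\}$ is uniformly bounded, it suffices to verify continuity at $t=0$ on each basis vector $e_j$. There, $R_{jj}(t) = e^{wt}P_{jj}(t) \to 1$ as $t\to 0$ by continuity of the minimal semigroup at $0$, and
\[
\|R(t)e_j - e_j\|_\mu \;\le\; \mu(j)|R_{jj}(t)-1| \;+\; \sum_{i\ne j}\mu(i)R_{ij}(t) \;\le\; \mu(j)|R_{jj}(t)-1| + \mu(j)(e^{wt}-R_{jj}(t))
\]
tends to zero; strong continuity at general $t\ge 0$ then follows from the semigroup property and uniform boundedness.

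For the statements involving $A$, convergence of $\sum_k R_{ik}(t)A_{kj}$ follows from \Ref{A-assn-2}: separating out the diagonal gives $\sum_{k\ne j}A_{kj}\mu(k) \le (w-A_{jj})\mu(j) < \infty$, and $R_{ik}(t) \le e^{wt}\mu(k)/\mu(i)$ bounds the non-negative off-diagonal contributions. To identify the derivative, I would differentiate $R_{ij}(t) = e^{wt}\mu(j)P_{ji}(t)/\mu(i)$ entrywise in $t$, using $P'(t) = QP(t)$ from \Ref{P-properties}: the $-w\delta_{jk}$ portion of $Q_{jk}$ exactly cancels the $we^{wt}$ term produced by differentiating the exponential prefactor, while the off-diagonal portion $A_{kj}\mu(k)/\mu(j)$ of $Q_{jk}$, once multiplied by $e^{wt}\mu(j)P_{ki}(t)/\mu(i)$ and summed over $k$, rearranges to $\sum_k R_{ik}(t)A_{kj} = (R(t)A)_{ij}$. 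Setting $t=0$ with $P(0) = I$ gives $R(0) = I$ and hence $R'(0) = A$. The main delicate point is justifying the interchange of the sum over $k$ with the time-derivative and with the expansion of $Q$; this is handled by the uniform bound $R_{ik}(t)|A_{kj}| \le e^{wT}\mu(k)|A_{kj}|/\mu(i)$ on $[0,T]$ together with the summability of $\mu(k)A_{kj}$ in $k$ off the diagonal, which has already been established above.
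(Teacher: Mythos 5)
Your argument is correct and follows essentially the same route as the paper: the $\mu$-norm bound and inequality~\Ref{R-inequality} come from sub-stochasticity of $P$, and the identification $R'(t)=R(t)A$ is obtained by unwinding the definition of $Q$ and invoking~\Ref{P-properties}, with the diagonal $-w\delta_{ij}$ term cancelling the $we^{wt}$ from the exponential prefactor. The only point of divergence is strong continuity: you reduce to finitely supported vectors by density plus local uniform boundedness and check $t\to0$ on each basis vector $e_j$, whereas the paper bounds $\|R(t)x-x\|_\mu$ directly for arbitrary $x\in\rr_\mu$ by the quantity $(e^{wt}-1)\sum_j\mu(j)|x_j| + 2\sum_i\mu(i)|x_i|(1-P_{ii}(t))$ and invokes dominated convergence; both are standard and of comparable length, so this is a cosmetic rather than substantive difference. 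One small remark: your worry about interchanging the $k$-sum with $d/dt$ is actually moot, since $R_{ij}(t)$ is a single product (not a series) in $t$, so the product rule plus the already-known identity $P'=QP$ gives the derivative without any limit interchange; the only summability you genuinely need is that $\sum_{k\ne j}A_{kj}\mu(k)P_{ki}(t)$ converges, which you correctly deduce from $A^T\mu\le w\mu$.
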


\bp
We note first that, for $x\in\rr_\m$, 
\eqa
    \nm{R(t)x} &\le& \sio\m(i)\sjo R_{ij}(t)|x_j| \Eq e^{wt} \sio\sjo\m(j)P_{ji}(t)|x_j|\non\\
      &\le& e^{wt} \sjo\m(j)|x_j| \Eq e^{wt}\nm{x},\label{xiR-bnd}
\ena
since~$P(t)$ is substochastic on $\Zp$; hence $R\colon\,\rr_\m \to \rr_\m$. 
To show strong continuity, we take $x\in\rr_\m$, and consider 
\eqs
    \lefteqn{\nm{R(t)x - x} \Eq \sio\m(i)\Blm \sjo R_{ij}(t)x_j - x_i\Brm
     \Eq \sio\Blm e^{wt}\sjo \m(j)P_{ji}(t)x_j - \m(i)x_i \Brm} \\
  &\le& (e^{wt}-1)\sio\sjo \m(j)P_{ji}(t)x_j + \sio\sum_{j\ne i}\m(j)P_{ji}(t)x_j  
    + \sio \m(i) x_i(1-P_{ii}(t)) \\
  &\le& (e^{wt}-1)\sjo\m(j)x_j + 2\sio \m(i)x_i(1-P_{ii}(t)),
\ens
from which it follows that $\lim_{t\to0}\nm{R(t)x - x} = 0$, by dominated
convergence, since $\lim_{t\to0}P_{ii}(t) = 1$ for each~$i\ge0$.

The inequality~\Ref{R-inequality} follows from the definition of~$R$ and
the fact that~$P$ is substochastic on~$\Zp$.  Then
\eqs
  (A^TR^T(t))_{ij} &=& \sum_{k\ne i} Q_{ik} \frac{\m(i)}{\m(k)}\,e^{wt}
    \frac{\m(k)}{\m(j)} P_{kj}(t) + (Q_{ii}+w)e^{wt} \frac{\m(i)}{\m(j)} P_{ij}(t) \\
  &=& \frac{\m(i)}{\m(j)}\,\left[(QP(t))_{ij} + wP_{ij}(t)\right]e^{wt},
\ens
with $(QP(t))_{ij} = \sko Q_{ik}P_{kj}(t)$ well defined because~$P(t)$ is 
sub-stochastic and~$Q$ is conservative.  Using~\Ref{P-properties}, this gives
\[
  (A^TR^T(t))_{ij}  \Eq \frac{\m(i)}{\m(j)}\,\frac{d}{dt}[P_{ij}(t)e^{wt}] 
              \Eq  \frac{d}{dt} R^T_{ij}(t),
\]
and this establishes~\Ref{A-properties}.
\ep

\section{Main approximation}\label{approximation}
\setcounter{equation}{0}
Let $X_N$, $N\ge1$, be a sequence of pure jump Markov processes as in 
Section~\ref{introduction}, with $A$ and~$F$ defined as in \Ref{determ}
and~\Ref{F-assn}, and suppose that~$F\colon\,\rr_\m \to \rr_\m$, with~$\rr_\m$
as defined in~\Ref{Rmu}, for some~$\m$
such that Assumption~\Ref{A-assn-2} holds.  Suppose also that~$F$ is locally Lipschitz  
in the $\m$-norm: for any $z>0$,
\eq\label{F-assn-2}
  \sup_{x \neq y\colon\,\nm{x},\nm{y} \le z}\nm{F(x)-F(y)}/\nm{x-y} \Le K(\m,F;z)
     \ <\ \infty.
\en 
Then, for~$x(0) \in \rr_\m$ and~$R$ as in~\Ref{R-def}, the integral equation 
\eq\label{deterministic}
   x(t) \Eq R(t) x(0) + \int_0^t R(t-s)F(x(s))\,ds.
\en
has a unique continuous solution~$x$ in~$\rr_\m$ on some non-empty time
interval $[0,\tmax)$, such that, if $\tmax < \infty$, then 
$\|x(t)\|_{\mu} \to \infty$ as $t \to \tmax$ (Pazy~1983, Theorem~1.4, Chapter~6). 
Thus, if~$A$ were the generator of~$R$, the function~$x$ would be  
a {\it mild solution\/} of the deterministic equations~\Ref{determ}.
We now wish to show that the process $x_N := N^{-1}X_N$ is close to~$x$.
To do so, we need a corresponding representation for~$X_N$.

To find such a representation, let $W(t)$, $t\ge0$, be a pure jump path 
on~$\xx_+$ that has 
only finitely many jumps up to time~$T$. Then we can write
\eq\label{1}
   W(t) \Eq W(0) + \ssjt \D W(\s_j),\qquad 0\le t\le T,
\en
where $\D W(s) := W(s) - W(s-)$ and $\s_j$, $j\ge1$, denote the times 
when~$W$ has its jumps.  
Now let~$A$ satisfy \Ref{A-assn-1} and~\Ref{A-assn-2}, 
and let~$R(\cdot)$ be the associated semigroup, as defined in~\Ref{R-def}.
Define the path $W^*(t)$, $0\le t\le T$, from the equation
\eq
\begin{array}{l}
  W^*(t) \DEq R(t)W(0) +  \ssjt R(t-\s_j)\D_j 
  - \int_0^t R(t-s)A W(s)\,ds,\qquad \label{1.5}
\end{array}
\en
where $\D_j := \D W(\s_j)$. 
Note that the latter integral makes sense, because each of the sums
$\sjo R_{ij}(t)A_{jk}$ is well defined, from Theorem~\ref{semigroup}, 
and because only finitely many of the coordinates of~$W$ are non-zero.

\begin{lemma}\label{X-star=X}
$W^*(t) = W(t)$ for all~$0 \le t \le T$.
\end{lemma}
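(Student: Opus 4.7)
The plan is a discrete integration-by-parts argument applied to the $\rr_\m$-valued function
$$\phi(s) \Def R(t-s)W(s), \qquad s \in [0,t].$$
If I can establish the identity
$$\phi(t) - \phi(0) \Eq -\int_0^t R(t-s)AW(s)\,ds + \ssjt R(t-\s_j)\D_j,$$
then, since $R_{ij}(0) = \m(i)P_{ij}(0)/\m(j) = \d_{ij}$ gives $\phi(t) = W(t)$, and $\phi(0) = R(t)W(0)$, rearranging reproduces the definition~\Ref{1.5} of $W^*(t)$ exactly, and so $W^*(t) = W(t)$.

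The verification proceeds coordinate by coordinate. Because $W$ has only finitely many jumps in $[0,T]$ and each $W(s) \in \xx_+$ has finite support, the set $F := \bigcup_{0 \le s \le T}\mathrm{supp}(W(s))$ is finite; hence for each~$i$ the expression $(R(t-s)W(s))^i = \sjo R_{ij}(t-s)W^j(s)$ is a finite sum over $j \in F$. Between consecutive jump times of~$W$, the function $s \mapsto W(s)$ is constant, and Theorem~\ref{semigroup} supplies the entrywise identity $R'(u) = R(u)A$, with $(R(u)A)_{ij} = \sko R_{ik}(u)A_{kj}$ well defined; differentiating termwise in the (finite) sum therefore gives
$$\frac{d}{ds}\bigl(R(t-s)W(s)\bigr)^i \Eq -\sum_{j \in F}(R(t-s)A)_{ij}W^j(s) \Eq -(R(t-s)AW(s))^i$$
on each inter-jump interval, while at each jump time~$\s_j$ the function~$\phi$ has jump $\phi(\s_j) - \phi(\s_j-) = R(t-\s_j)\D_j$ (under the right-continuous convention for $W$).

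Applying the classical fundamental theorem of calculus on each inter-jump subinterval and summing the contributions together with the finitely many jump increments of~$\phi$ yields the desired identity in each coordinate, and hence in~$\rr_\m$. The only technical point, which I expect to be the mildest of obstacles, is bookkeeping: one has to confirm that the joint finite support of~$W$ on $[0,T]$ reduces every series in sight to a finite sum, so that termwise differentiation is immediate and the coordinate integral $\int_0^t(R(t-s)AW(s))^i\,ds$ collapses to an ordinary Lebesgue integral of a bounded, piecewise-continuous scalar function. Finiteness of~$F$ together with the output of Theorem~\ref{semigroup} handles this automatically, and the argument then mimics the scalar FTC verbatim.
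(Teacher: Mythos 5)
Your proof is correct, but it takes a genuinely different route from the paper's. You define $\phi(s) := R(t-s)W(s)$ and apply the fundamental theorem of calculus coordinatewise on each inter-jump interval, using $R'(u) = R(u)A$ to differentiate $\phi$ directly, and then collect the jump increments; the observation that $F = \bigcup_{s\le T}\mathrm{supp}(W(s))$ is finite reduces everything to finite sums, which is the right technical control. The paper instead runs a forward induction over the jump intervals: fixing $t$ with $W^*(s)=W(s)$ for $s\le t$ and with no jump in $(t,t+h]$, it expands $W^*(t+h)-W^*(t)$ via the semigroup property as $(R(h)-I)W^*(t) - \int_t^{t+h}R(t+h-s)AW(t)\,ds$, and then kills this with the \emph{integrated} identity $\int_0^h R(v)Ax\,dv = (R(h)-I)x$ coming from~\Ref{A-properties}, rather than with termwise differentiation of $\phi$. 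Both arguments rest on exactly the same input from Theorem~\ref{semigroup}, but your version is the textbook variation-of-constants computation, whereas the paper's bypasses the need to justify differentiability and absolute continuity of the composite $s \mapsto R(t-s)W(s)$ and works entirely with increments of $W^*$. In your version that regularity is a genuine (if mild) obligation: you should note that $(R(u)A)_{ij}$ is continuous in $u$ (it is, by dominated convergence using the bound~\Ref{m-A-sum} and the domination $R_{ik}(u)\le e^{wu}\mu(k)/\mu(i)$), so that $\phi^i$ is $C^1$ on each inter-jump interval and FTC applies; with that remark added, the argument is complete and delivers the same identity~\Ref{1.5} as the paper.
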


\begin{proof}
Fix any~$t$, and suppose that $W^*(s) = W(s)$ for all~$s \le t$.
This is clearly the case for $t=0$.
Let~$\s(t)>t$ denote the time of the first jump of~$W$
after~$t$.  Then, for any $0 < h < \s(t) - t$,  using the semigroup property
for~$R$ and~\Ref{1.5},
\eqa
  \lefteqn{W^*(t+h) - W^*(t)} \non \\
    &&=\ (R(h)-I)R(t)W(0)
            + \ssjt (R(h)-I)R(t-\s_j)\D_j \label{1.7} \\
    &&\qquad \mbox{} -  \int_0^t (R(h)-I)R(t-s)A W(s)\,ds
        -  \int_t^{t+h} R(t+h-s)A W(t)\,ds, \non
\ena
where, in the last integral, we use the fact that there are no jumps of~$W$
between $t$ and~$t+h$. Thus we have
\eqa
   \lefteqn{W^*(t+h) - W^*(t)}\non \\
        &=& (R(h)-I)\left\{R(t)W(0) + \ssjt R(t-\s_j)\D_j
       -  \int_0^t R(t-s)A W(s)\,ds\right\} \non \\
    &&\qquad \mbox{} -  \int_t^{t+h} R(t+h-s)A W(t)\,ds \non \\
    &=& (R(h)-I)W(t) - \int_t^{t+h} R(t+h-s)A W(t)\,ds.  \label{1.8}
\ena
But now, for~$x\in\xx_+$,
\[
    \int_t^{t+h} R(t+h-s)A x \,ds \Eq (R(h)-I)x,
\]
from~\Ref{A-properties}, so that $W^*(t+h) = W^*(t)$ for all $t+h < \s(t)$, implying
that $W^*(s) = W(s)$ for all~$s < \s(t)$.  On the
other hand,  from~\Ref{1.5}, we have $W^*(\s(t)) - W^*(\s(t)-) = \D W(\s(t))$, so that
$W^*(s) = W(s)$ for all~$s \le \s(t)$. Thus we can prove equality over the
interval $[0,\s_1]$, and then successively over the intervals $[\s_j,\s_{j+1}]$,
until $[0,T]$ is covered.
\end{proof}

\bigskip
Now suppose that~$W$ arises as a realization of~$X_N$.  
Then~$X_N$ has transition rates such that 
\eq\label{2}
   M_N(t) \DEq  \ssjt \D X_N(\s_j) 
       -  \int_0^t A X_N(s)\,ds - \int_0^t NF(x_N(s))\,ds
\en
is a zero mean local martingale.  In view
of Lemma~\ref{X-star=X}, we can use~\Ref{1.5} to write
\eq\label{3}
   X_N(t) \Eq R(t)X_N(0) +  \NNN(t) + N\int_0^t R(t-s)F(x_N(s))\,ds,
\en
where
\eqa
     \lefteqn{\NNN(t) \DEq    \ssjt R(t-\s_j)\D X_N(\s_j)} \non\\
     &&\mbox{}\qquad-  \int_0^t R(t-s)A X_N(s)\,ds
       -   \int_0^t R(t-s)NF(x_N(s))\,ds.\phantom{HHH}\label{N-def}
\ena

Thus, comparing~\Ref{3} and~\Ref{deterministic}, we expect
$x_N$ and~$x$ to be close, for $ 0\le t\le T < \tmax$,
provided that we can show that $\sup_{t\le T}\nm{\nnn(t)}$ is small,
where $\nnn(t) := N^{-1}\NNN(t)$. Indeed,
if $x_N(0)$ and~$x(0)$ are close, then
\eqa
   \lefteqn{\nm{x_N(t) - x(t)} }\non\\[1ex]
       &&\le\ \nm{ R(t)(x_N(0) - x(0)) }  \non\\
   &&\qquad\mbox{}+
       \int_0^t \nm{ R(t-s)[F(x_N(s)) - F(x(s))] }\,ds
       + \nm{\nnn(t)} \phantom{HHHH}\non\\
   &&\Le  e^{wt} \nm{x_N(0) - x(0) } \non\\
   &&\qquad\mbox{} +
      \int_0^t e^{w(t-s)} K(\m,F;2\Xi_T)\nm{x_N(s) - x(s)}\,ds 
        + \nm{\nnn(t)},\label{main-ineq}
\ena
by~\Ref{xiR-bnd}, with the stage apparently set for Gronwall's inequality,
assuming that $\nm{x_N(0) - x(0) }$ and $\sutt\nm{\nnn(t)}$ are small
enough that then $\nm{x_N(t)} \le 2\Xi_T$ for $0\le t\le T$, where
$\Xi_T := \sup_{0\le t\le T}\nm{x(t)}$. 

Bounding $\sutt\nm{\nnn(t)}$ is, however, not so easy.
Since~$\NNN$ is not itself a martingale, we cannot directly
apply martingale inequalities to control its fluctuations.
However, since
\eq\label{tmn-expression}
   \NNN(t) \Eq \int_0^{t} R(t-s)\,dM_N(s),
\en
we can hope to use control over the local martingale~$M_N$ instead.  
For this and the subsequent argument,
we introduce some further assumptions.  

\begin{assumption}\label{zeta-assns}\mbox{}\\[2ex]
1.\quad There exists $r=r_\m \le r_{\max}^{(2)}$ such that 
   $\sup_{j\ge0}\{\m(j)/\n_r(j)\} < \infty$.\\[2ex]
2.\quad There exists $\z\in\rr$ with $\z(j)\ge1$ for all~$j$ such 
that~\Ref{zeta-bnd} is 
satisfied for some $\bbt = \bbt(\z) \ge 1$ and $r=r(\z)$ such that 
  $1\le r(\z)\le \rmax\ut$, and that
\eq\label{zeta-cond}
   \ZZZ \Def \sko \frac{\m(k)(|A_{kk}|+1)}{\sqrt{\z(k)}} \ <\ \infty.
\en
\end{assumption}

\nin  The requirement that~$\z$ satisfies~\Ref{zeta-cond} as well as
satisfying~\Ref{zeta-bnd} for some $r \le \rmax\ut$ implies in practice
that it must be possible to take $\rmax\ui$ and~$\rmax\ut$ to be quite
large in Assumption~\ref{ap-assns}; see the examples in Section~\ref{examples}. 

\medskip
Note that part 1 of Assumption~\ref{zeta-assns} implies that 
$\lim_{j\to\infty}\{\m(j)/\n_r(j)\} = 0$ for some
$r = \tr_\m \le r_\m+1$.  We define
\eq\label{rho-def}
   \r(\z,\m) \Def \max\{r(\z),p(r(\z)),\tr_\m\},
\en
where~$p(\cdot)$ is as in Assumptions~\ref{ap-assns}.
We can now prove the following lemma, which enables us to control the paths
of~$\NNN$ by using fluctuation bounds for the martingale~$M_N$.

\begin{lemma}\label{fubini}
Under Assumption~\ref{zeta-assns},
\[
   \NNN(t) \Eq M_N(t) + \int_0^t R(t-s)AM_N(s)\,ds.
\]
\end{lemma}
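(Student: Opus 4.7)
The plan is to substitute the explicit definition of $M_N$ from~\Ref{2} into the right-hand side and show that the result reduces componentwise to the explicit form of $\NNN(t)$ in~\Ref{N-def}. Split $M_N(s) = J(s) - D(s)$, where $J(s) := \sum_{j:\s_j\le s}\D X_N(\s_j)$ is the pure-jump part and $D(s) := \int_0^s h(u)\,du$ with $h(u) := AX_N(u) + NF(x_N(u))$. Then the task is to compute $\int_0^t R(t-s)AJ(s)\,ds$ and $\int_0^t R(t-s)AD(s)\,ds$ separately, and recombine.

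For the jump piece, because by Lemma~\ref{AP-1} and Theorem~\ref{AP-2} only finitely many jumps occur on $[0,t]$, I can interchange the finite sum with the integral to obtain $\int_0^t R(t-s)AJ(s)\,ds = \sum_{j:\s_j\le t}\int_{\s_j}^t R(t-s)A\D X_N(\s_j)\,ds$. The identity $\int_{\s_j}^t R(t-s)Ax\,ds = R(t-\s_j)x - x$ (applied to each $x = \D X_N(\s_j)$) follows from~\Ref{A-properties} by the fundamental theorem of calculus, after the change of variable $u = t-s$. Summing gives $\sum_{j:\s_j\le t} R(t-\s_j)\D X_N(\s_j) - J(t)$. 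For the drift piece, Fubini yields $\int_0^t R(t-s)AD(s)\,ds = \int_0^t \left(\int_u^t R(t-s)Ah(u)\,ds\right)du$, and applying the same identity to the inner integral turns this into $\int_0^t R(t-u)h(u)\,du - D(t)$. Adding $M_N(t) = J(t) - D(t)$ to the sum of the two pieces, the $\pm J(t)$ and $\mp D(t)$ terms cancel, leaving
\eqs
   M_N(t) + \int_0^t R(t-s)AM_N(s)\,ds
     &=& \ssjt R(t-\s_j)\D X_N(\s_j) - \int_0^t R(t-u)h(u)\,du,
\ens
which is exactly $\NNN(t)$ as defined in~\Ref{N-def}.

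The main obstacle is justifying the Fubini interchange and making sense of $R(t-s)AM_N(s)$ componentwise in a setting with countably many coordinates, since $AX_N(s)$ and hence $AM_N(s)$ generally have infinitely many nonzero components even though $X_N(s)\in\xx_+$. This is where Assumption~\ref{zeta-assns} is essential. The bound~\Ref{zeta-bnd} together with Corollary~\ref{tau-az-bnd} ensures that, up to the stopping time $\t\uN(a,\z)$, the total rate $\sjj\a_J(x_N(s))d(J,\z)$ is almost surely bounded on $[0,T]$, which controls the $\z$-weighted magnitude of the jump contributions to $M_N$; combined with the summability~\Ref{zeta-cond} $\sko \m(k)(|A_{kk}|+1)/\sqrt{\z(k)} < \infty$, this provides absolute summability of all the double sums/integrals that arise when one expands the matrix-vector products componentwise. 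In particular, it guarantees that $\sum_k|A_{jk}|\,|M_N(s)^k|$ is finite for all relevant $j$ and $s$, that the integral $\int_0^t R(t-s)AM_N(s)\,ds$ is absolutely convergent entry-by-entry, and that the Fubini swap for the drift term is valid. The remaining steps are then routine applications of~\Ref{A-properties} and the fact that only finitely many jumps occur on bounded intervals.
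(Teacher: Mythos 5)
Your proof is essentially the paper's argument read in the opposite direction: the paper expands $R(t-s) = I + \int_0^{t-s} R(v)A\,dv$ inside $\NNN(t) = \int_0^t R(t-s)\,dM_N(s)$ and applies Fubini, while you decompose $M_N$ into jump and drift parts and apply the same semigroup identity~\Ref{A-properties} to the right-hand side, justifying the interchanges by the same bounds \Ref{R-inequality}, \Ref{m-A-sum}, \Ref{zeta-bnd} and~\Ref{zeta-cond}. Two minor mis-citations worth fixing: the almost-sure finiteness of the number of jumps on $[0,t]$ comes from Lemma~\ref{AP-0} (which gives $\txin=\infty$ a.s.), not from Lemma~\ref{AP-1} and Theorem~\ref{AP-2}; and the a.s.\ uniform bound on $\sjj\a_J(x_N(s))d(J,\z)$ over $[0,T]$ follows from~\Ref{zeta-bnd} together with the a.s.\ finiteness of $\sup_{0\le s\le T}S_{r(\z)}\uN(s)$ (the remark following Theorem~\ref{AP-2}), whereas Corollary~\ref{tau-az-bnd} only supplies the quantitative probability estimate on the stopping time, not the pathwise boundedness needed for Fubini.
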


\bp
From~\Ref{A-properties}, we have
\[
   R(t-s) \Eq I + \int_0^{t-s} R(v)A\,dv.
\]
Substituting this into~\Ref{tmn-expression}, we obtain
\eqs
  \NNN(t) &=& \int_0^{t} R(t-s)\,dM_N(s) \\
   &=& M_N(t) + \int_0^t\Blb \int_0^t R(v)A\bone_{[0,t-s]}(v)\,dv\Brb
     dM_N(s) \\
   &=& M_N(t) + \int_0^t\Blb \int_0^t R(v)A\bone_{[0,t-s]}(v)\,dv\Brb dX_N(s) \\
  &&\qquad\mbox{}- \int_0^t\Blb \int_0^t R(v)A\bone_{[0,t-s]}(v)\,dv\Brb F_0(x_N(s))\,ds.
\ens
It remains to change the order of integration in the double integrals, 
for which we use Fubini's theorem.

In the first, the outer integral is almost surely a finite sum, and at each
jump time~$t^{X_N}_l$ we have $dX_N(t^{X_N}_l) \in \jj$.  Hence it is enough that, for
each $i$, $m$ and~$t$, $\sjo R_{ij}(t)A_{jm}$ is absolutely summable, which 
follows from Theorem~\ref{semigroup}.   Thus we have
\eq\label{swap-1}
   \int_0^t\Blb \int_0^t R(v)A\bone_{[0,t-s]}(v)\,dv\Brb dX_N(s)
    \Eq \int_0^t R(v)A\{X_N(t-v) - X_N(0)\}\,dv.
\en

For the second, the $k$-th component of $R(v)A F_0(x_N(s))$ is just
\eq\label{integrand-2}
    \sjo R_{kj}(v) \slo A_{jl} \sjj J^l \a_J(x_N(s)).
\en
Now, from~\Ref{R-inequality}, we have $0\le R_{kj}(v) \le \m(j)e^{wv}/\m(k)$,
and 
\eq\label{m-A-sum}
     \sjo \m(j)|A_{jl}| \Le \m(l)(2|A_{ll}|+w),
\en
because $A^T\m \le w\m$.  Hence, putting absolute values in the 
summands in~\Ref{integrand-2} yields at most
\[
     \frac{e^{wv}}{\m(k)}\,\sjj\a_J(x_N(s))\,\slo |J^l|\m(l)(2|A_{ll}|+w).
\]
Now, in view of~\Ref{zeta-cond} and since $\z(j)\ge1$ for all~$j$, 
there is a constant $K<\infty$ such that $\m(l)(2|A_{ll}|+w) \le K\z(l)$. 
Furthermore, $\z$ satisfies~\Ref{zeta-bnd}, so that, by Corollary~\ref{tau-az-bnd},
$\sjj\a_J(x_N(s))\,\slo |J^l|\z(l)$ is a.s.\ uniformly bounded 
in $0 \le s \le T$.  Hence we can apply Fubini's theorem,
obtaining
\[
   \int_0^t\Blb \int_0^t R(v)A\bone_{[0,t-s]}(v)\,dv\Brb F_0(x_N(s))\,ds
    \Eq \int_0^t R(v)A \Blb\int_0^{t-v} F_0(x_N(s))\,ds\Brb\,dv,
\]
and combining this with~\Ref{swap-1} proves the lemma.
\ep

\bigskip
We now introduce the exponential martingales that we use to bound
the fluctuations of~$M_N$.
For~$\th\in\R^{\Z_+}$ bounded and $x \in \rr_\m$, 
\[
   Z_{N,\th}(t) \Def  \begin{array}{ll}  e^{\th^T x_N(t)}\exp\Blb 
        - \int_0^t g_{N\th}(x_N(s-)) \,ds \Brb,  & t \ge 0,
             \end{array}
\]
is a non-negative finite variation local martingale, where
\[
   g_{N\th}(\xi) \Def \sjj N\a_J(\xi)\Bl e^{N^{-1}\th^T J} - 1 \Br.
\]
For $t\ge 0$, we have
\eqa
   \log Z_{N,\th}(t)
    &=& \th^T x_N(t)  - \int_0^t g_{N\th}(x_N(s-))\,ds    \non\\         
    &=&  \th^T m_N(t) - \int_0^t \f_{N,\th}(x_N(s-),s)\,ds,
    \label{Z-expression}
\ena
where
\eq\label{phi-def}
  \f_{N,\th}(\xi) \Def \sjj N\a_J(\xi)\Bl e^{N^{-1}\th^T J} - 1
          - N^{-1}\th^T J \Br,
\en
and $m_N(t) := N^{-1}M_N(t)$.
Note also that we can write
\eq\label{phi-def-2}
    \f_{N,\th}(\xi) \Eq N\int_0^1 (1-r) D^2 v_N(\xi,r\th)[\th,\th]\,dr,
\en
where
\[
      v_N(\xi,\th') \Def \sjj \a_J(\xi) e^{N^{-1}(\th')^T J},
\]
and $D^2 v_N$ denotes the matrix of second derivatives with respect to the second argument:
\eq\label{D2-explicit}
     D^2 v_N(\xi,\th')[\z_1,\z_2] \Def N^{-2}\sjj \a_J(\xi) e^{N^{-1}(\th')^T J}
        \z_1^TJJ^T \z_2
\en
for any $\z_1,\z_2\in\rr_\m$.

Now choose any $B := (B_k,\,k\ge0) \in \rr$, and define $\tilt_k\uN(B)$ by
$$
   \tilt_k\uN(B) \Def \inf \Blb t \ge 0\colon\, \sum_{J: J_k \not = 0} \a_J (x_N(t-)) > B_k\Brb.
$$
Our exponential bound is as follows.

\begin{lemma}\label{compt-bnd}
For any $k\ge 0$, 
$$
   \pr \left[ \sup_{0 \le t \le T\wtkb} |m_N^k (t)| \ge \delta \right] 
       \Le 2 \exp (-\delta^2N/2B_k K_*T).
$$
for all $0 < \delta \le  B_k K_*T$, where $K_* := J_*^2 e^{J_*}$, and~$J_*$ is as
in~\Ref{finite-jumps}.
\end{lemma}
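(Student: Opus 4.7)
The approach is a one-coordinate Cram\'er--Chernoff argument based on the exponential supermartingale $Z_{N,\th}$ of~\Ref{Z-expression}. I would take $\th$ supported only at the $k$-th coordinate, i.e.\ $\th^l = \th_k$ if $l=k$ and $\th^l = 0$ otherwise, so that $\th^T J = \th_k J^k$ and only jumps $J\in\jj$ with $J^k\ne 0$ contribute to $\f_{N,\th}$. With this choice, \Ref{Z-expression} reads
$$
  \th_k\,m_N^k(t) \Eq \log Z_{N,\th}(t) + \int_0^t \f_{N,\th}(x_N(s-))\,ds,
$$
and $Z_{N,\th}$ is a nonnegative local martingale with $Z_{N,\th}(0) = 1$, hence a supermartingale (which remains so after stopping at~$\tkb$).

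The key quantitative step is to control $\int \f_{N,\th}$ on the stopped interval $[0, T\wtkb]$. Applying the Taylor bound $|e^x - 1 - x| \le \half x^2 e^{|x|}$ with $x = N^{-1}\th_k J^k$, together with $|J^k|\le J_*$ from~\Ref{finite-jumps} and the restriction $|\th_k|\le N$ (which forces $e^{N^{-1}|\th_k||J^k|}\le e^{J_*}$), I would obtain the pointwise estimate
$$
  |\f_{N,\th}(\x)| \Le \frac{\th_k^2 J_*^2 e^{J_*}}{2N} \sum_{J:\,J^k\ne 0}\a_J(\x).
$$
By definition of~$\tkb$, the last sum is $\le B_k$ on $[0,\tkb)$, so for every $0\le t\le T$,
$$
  \int_0^{t\wtkb} |\f_{N,\th}(x_N(s-))|\,ds \Le \frac{\th_k^2 K_* B_k T}{2N},
  \qquad K_* = J_*^2 e^{J_*}.
$$

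For $\th_k>0$, the identity in the first paragraph therefore forces $\{\sup_{0\le t\le T\wtkb} m_N^k(t) \ge \d\}$ to be contained in $\{\sup_{0\le t\le T\wtkb} Z_{N,\th}(t) \ge \exp(\th_k\d - \th_k^2 K_* B_k T/(2N))\}$. Doob's maximal inequality for the nonnegative supermartingale $Z_{N,\th}$ bounds the probability of the latter event by $\exp(-\th_k\d + \th_k^2 K_* B_k T/(2N))$. The optimal choice $\th_k = N\d/(K_* B_k T)$ yields the bound $\exp(-N\d^2/(2K_* B_k T))$; the admissibility condition $\th_k\le N$ is precisely the hypothesis $\d\le B_k K_* T$. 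A symmetric argument with $\th_k = -N\d/(K_* B_k T)$ handles $\{\inf_{0\le t\le T\wtkb} m_N^k(t) \le -\d\}$, and a union bound produces the factor~$2$. The main thing to be careful about is keeping the sharp $\half$ in the second-order Taylor remainder, since losing it would cost a factor of~$2$ in the exponent and break the stated constant.
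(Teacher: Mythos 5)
Your proof is correct and follows essentially the same line as the paper's: a one-coordinate exponential-supermartingale Chernoff argument with the same Taylor bound on $\f_{N,\th}$ (the paper phrases it via the integral form~\Ref{phi-def-2} rather than the scalar remainder bound, but the estimate and constant $K_*$ are identical) and the same optimal choice $\th_k = N\d/(K_*B_kT)$. The only technical variation is that you invoke Doob's maximal inequality for the nonnegative supermartingale directly, whereas the paper stops $Z_{N,\th}$ at the first time $m_N^k$ exceeds $\d$ and applies optional stopping together with Fatou's lemma — equivalent devices yielding the same bound.
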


\bp
Take $\th = e^{(k)}\beta$, for~$\beta$ to be chosen later.
We shall argue by stopping 
the local martingale $Z_{N,\th}$ at time $\s\uN(k,\d)$,
where
\[
   \s\uN(k,\d) \Def T \wedge \tkb \wedge\inf\{t\colon\, m_N^k(t) \ge \d\}.
\]
Note that $ e^{N^{-1}\th^T J} \le e^{J_*}$,
so long as $|\beta |\le N$, so that 
\eqs
   &&D^2 v_N(\xi,r\th)[\th,\th] 
      \Le N^{-2}\Bl\sum_{J: J_k \not = 0} \a_J(\xi)\Br \beta^2 K_*  .
\ens
Thus, from \Ref{phi-def-2}, 
we have
\[
      \f_{N,\th}(x_N(u-)) \Le \half  N^{-1} B_k  \beta^2 K_*,\qquad u \le \tkb,
\]
and hence, on the event that $\s\uN(k,\d) = \inf\{t\colon\, m_N^k(t) \ge \d\} 
\le (T \wedge \tkb)$, we have
\[
   Z_{N,\th}(\s(k,\d)) \Ge \exp\{\beta \d - \half N^{-1} B_k \beta^2 K_* T\}.
\]
But since $Z_{N,\th}(0) = 1$, it now follows from the optional stopping theorem and
Fatou's lemma that
\eqs
   1 &\ge& \ex\{Z_{N,\th}(\s\uN(k,\d))\} \\
     &\ge&
     \pr\Bigl[\sup_{0\le t\le T\wtkb} m_N^k(t) \ge \d\Bigr]
           \exp\{\beta \d - \half N^{-1} B_k \beta^2 K_* T\}.        
\ens
We can choose $\beta = \delta N/B_k K_* T$, 
as long as $\delta /  B_k K_* T \le 1$,  obtaining
$$
     \pr \Bl \sup_{0\le t\le T\wtkb} m_N^k (t) \ge \delta \Br
      \Le \exp (-\delta^2N/2B_k K_* T ).
$$
Repeating with 
\[
    \tilde\s\uN(k,\d) \Def T \wedge \tkb \wedge\inf\{t\colon\, -m_N^k(t) \ge \d\},  
\]
and choosing $\beta = \delta N/B_k K_* T$, gives the lemma.
\ep

\bigskip
The preceding lemma gives a bound for each individual component of~$M_N$.
We need first to translate this into a statement for all components simultaneously.
For~$\z$ as in Assumption~\ref{zeta-assns}, we start by writing
\eq
   Z_*\ui \Def \max_{k\ge1} k^{-1}\#\{m\colon\,\z(m) \le k\} ; \quad
   Z_*\ut \Def \sup_{k\ge0} \frac{\m(k)(|A_{kk}|+1)}{\sqrt{\z(k)}}\,.
\label{Zstar-defs}
\en
$Z_*\ut$ is clearly finite, because of Assumption~\ref{zeta-assns}, and the same is
true for~$Z_*\ui$ also, since~$Z$
of Assumption~\ref{zeta-assns} 
is at least $\#\{m\colon\,\z(m) \le k\}/{\sqrt k}$, 
for each~$k$. Then, using the definition~\Ref{tau-az} of~$\t\uN(a,\z)$, note that,
for every~$k$, 
\eq\label{h-bnd-1}
  \sum_{J\colon\,J^k\neq0} \a_J(x_N(t))h(k) \Le 
     \sum_{J\colon\,J^k\neq0} \frac{\a_J(x_N(t))h(k)d(J,\z)}{|J^k|\z(k)} 
       \Le \frac{a h(k)}{\z(k)},
\en
for any $t < \t\uN(a,\z)$ and any $h \in \rr$, 
and that, for any ${\mathcal K}\subseteq \integ_+$,
\eqa
  \sum_{k\in {\mathcal K}}\sum_{J\colon\,J^k\neq0} \a_J(x_N(t))h(k) &\le&
     \sum_{k\in {\mathcal K}}\sum_{J\colon\,J^k\neq0} 
                    \frac{\a_J(x_N(t))h(k)d(J,\z)}{|J^k|\z(k)}\non \\
   &\le& \frac{a }{\min_{k\in {\mathcal K}}(\z(k)/h(k))}.\label{h-bnd-2}
\ena
From~\Ref{h-bnd-1} with $h(k)=1$ for all~$k$, if we choose
$B := (a/\z(k),\,k\ge0)$, then $\t\uN(a,\z) \le \tkb$ for all~$k$.
For this choice of~$B$, we can take
\eq\label{delta-def}
  \d^2_k \Def \d^2_k(a) \Def  \frac{4 a K_* T \log N}{N\z(k)} \Eq 
      \frac{4 B_k K_* T \log N}{N}  
\en
in Lemma~\ref{compt-bnd} for $ k\in \k_N(a)$, where
\eq\label{kappa-a-def}
   \k_N(a) \Def \left\{k\colon\, \z(k) \le \halfh a K_* T N/\log N\right\}
           \Eq \left\{k\colon\, B_k \ge 4\log N/ K_* T N\right\}, 
\en
since then $\d_k(a) \le B_kK_*T$. Note that then, from~\Ref{zeta-cond},
\eq\label{mu-norm-delta}
    \sum_{k\in\k_N(a)} \m(k)\d_k(a) \Le 2 \ZZZ \sqrt{a K_* T N^{-1}\log N},
\en
with $\ZZZ$ as defined in Assumption~\ref{zeta-assns}, and that 
\eq\label{mod-kappa}
   |\k_N(a)| \Le \halfh a Z_*\ui K_* T N/\log N.
\en

\begin{lemma}\label{mn-tilde-bits}
   If Assumptions~\ref{zeta-assns} are satisfied, taking $\d_k(a)$ and~$\k_N(a)$ as 
defined in \Ref{delta-def} and~\Ref{kappa-a-def}, and for any $\h\in\rr$, we have
\eqs
  1.&& \pr\left[ \bigcup_{k\in\k_N(a)} \Bigl\{\sup_{0\le t\le T\wtaz} |m_N(t)| 
     \ge \d_k(a) \Bigr\} \right] \Le \frac{aZ_*\ui K_* T}{ 2N\log N};\\
  2.&& \pr\left[ \sum_{k\notin\k_N(a)} X_N^k(t) = 0\ \mbox{for all}\ 0\le t\le T\wtaz\right]
     \ \ge\ 1 - \frac{4\log N}{K_*N}; \\
  3.&& \sup_{0\le t\le T\wtaz}\Blb \sum_{k\notin\k_N(a)} \h(k)|F^k(x_N(t))| \Brb
     \Le \frac{aJ_*}{\min_{k \notin \k_N(a)}(\z(k)/\h(k))}.
\ens
\end{lemma}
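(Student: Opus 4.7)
The plan is to handle the three claims in order, with the rate bounds~\Ref{h-bnd-1} and~\Ref{h-bnd-2} together with the specific size of $\k_N(a)$ being the common ingredients.

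For part~1, I would apply Lemma~\ref{compt-bnd} component by component, with the choice $B_k := a/\z(k)$. By~\Ref{h-bnd-1} (with $h\equiv 1$), $\tilt_k\uN(B) \ge \t\uN(a,\z)$ for every $k$, so the lemma gives
\[
\pr\Bigl[\sup_{0\le t\le T\wtaz} |m_N^k(t)| \ge \d_k(a)\Bigr]\ \le\ 2\exp\bigl(-\d_k(a)^2 N/(2 B_k K_* T)\bigr).
\]
The choice~\Ref{delta-def} arranges that this exponent equals $2\log N$, giving a per-component tail bound of $2N^{-2}$. The side condition $\d_k(a)\le B_k K_* T$ required by Lemma~\ref{compt-bnd} is satisfied exactly when $k\in\k_N(a)$, by~\Ref{kappa-a-def}. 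A union bound over $\k_N(a)$, combined with the cardinality estimate~\Ref{mod-kappa}, then produces $2N^{-2}\cdot |\k_N(a)|\le aZ_*\ui K_*T/(2N\log N)$.

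For part~2, the sum $\sum_{k\notin\k_N(a)}X_N^k(t)$ can only become positive through a jump $J$ with $J^k\ne 0$ for some $k\notin\k_N(a)$. The total rate of such jumps at any time $s < \t\uN(a,\z)$ equals $\sum_{k\notin\k_N(a)}\sum_{J\colon J^k\ne 0}\a_J(x_N(s))$, which, by~\Ref{h-bnd-2} (with $h\equiv 1$ and $\mathcal{K}$ the complement of $\k_N(a)$), is bounded by $a/\min_{k\notin\k_N(a)}\z(k)$. The defining inequality in~\Ref{kappa-a-def} gives $\min_{k\notin\k_N(a)}\z(k) > \halfh a K_*TN/\log N$, so this rate is at most $4\log N/(K_*TN)$. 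The expected number of such jumps in $[0,T\wtaz]$ is thus at most $4\log N/(K_*N)$, and Markov's inequality finishes the argument, tacitly using that $\sum_{k\notin\k_N(a)}X_N^k(0) = 0$ (which holds for $N$ sufficiently large because $X_N(0)$ has finite support and the sets $\k_N(a)$ are increasing and exhaust $\integ_+$).

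For part~3, which is a pathwise estimate on $[0,T\wtaz]$, the idea is to reduce to~\Ref{h-bnd-2} via the bound $|J^k|\le J_*$ from~\Ref{finite-jumps}. Using the decomposition~\Ref{F-assn} to write $F^k$ in the jump-rate form $\sum_J J^k\a_J$, we obtain $|F^k(x)|\le J_*\sum_{J\colon J^k\ne 0}\a_J(x)$. Multiplying by $\h(k)$, summing over $k\notin\k_N(a)$, and invoking~\Ref{h-bnd-2} with $h=\h$ and $\mathcal{K}$ the complement of $\k_N(a)$, yields exactly the stated bound $aJ_*/\min_{k\notin\k_N(a)}(\z(k)/\h(k))$.

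The main obstacle is the calibration of constants: the definition~\Ref{delta-def} of $\d_k(a)$, the cardinality estimate~\Ref{mod-kappa}, and the threshold~\Ref{kappa-a-def} must be aligned so that the union bound in part~1 and the rate bound in part~2 both land at precisely the advertised order in $N$. Once these ingredients are in place, each of the three assertions follows in a few lines from the general facts established in Section~\ref{a-priori} and the exponential tail of Lemma~\ref{compt-bnd}.
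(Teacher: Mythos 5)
Your proposal is correct and takes essentially the same route as the paper: Lemma~\ref{compt-bnd} with $B_k = a/\z(k)$ together with a union bound over the cardinality estimate~\Ref{mod-kappa} for part~1, the jump-rate estimate~\Ref{h-bnd-2} combined with~\Ref{kappa-a-def} and Markov's inequality for part~2, and \Ref{h-bnd-2} again together with $|J^k|\le J_*$ for part~3. Your remark that part~2 tacitly requires the initial state to have support in~$\k_N(a)$ flags a point the paper's own proof also leaves unstated.
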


\bp
For part~1, use Lemma~\ref{compt-bnd} together with \Ref{delta-def} and~\Ref{mod-kappa}
to give the bound.
For part~2, the total rate of jumps into coordinates with indices $k \notin \k_N(a)$
is 
\[
   \sum_{k\notin\k_N(a)} \sum_{J\colon\,J^k\neq0} \a_J(x_N(t)) 
      \Le \frac{a }{\min_{k\notin\k_N(a)}\z(k)},
\]
if $t\le \t\uN(a,\z)$, using~\Ref{h-bnd-2} with $\mathcal K = (\k_N(a))^c$, which, combined with~\Ref{kappa-a-def}, 
proves the claim.  For the final part, if $t\le \t\uN(a,\z)$, 
\[
   \sum_{k\notin\k_N(a)}\h(k)|F^k(x_N(t))|  \Le
      \sum_{k\notin\k_N(a)}\h(k) \sum_{J\colon\,J^k\neq0} \a_J(x_N(t)) J_*,
\]
and the inequality follows once more from~\Ref{h-bnd-2}.
\ep

\bigskip

Let $B_N\ui(a)$ and~$B_N\ut(a)$ denote the events
\eqa
  B_N\ui(a) &:=& \Blb\sum_{k\notin\k_N(a)} X_N^k(t) = 0\ \mbox{for all}\ 
     0\le t\le T\wtaz\Brb;\non\\
  B_N\ut(a) &:=& \Bl\bigcap_{k\in\k_N(a)} \Bigl\{\sup_{0\le t\le T\wtaz} 
       |m_N(t)| 
     \le \d_k(a) \Bigr\}\Br, \label{BN-defs}
\ena
and set $B_N(a) := B_N\ui(a) \cap B_N\ut(a)$.
Then, by Lemma~\ref{mn-tilde-bits}, we deduce that
\eq\label{BN-complement}
   \pr[B_N(a)^c] \Le \frac{aZ_*\ui K_* T}{ 2N\log N} + \frac{4\log N}{K_*N},
\en
of order $O(N^{-1}\log N)$ for each fixed~$a$. 
Thus we have all the components of~$M_N$ simultaneously controlled, except
on a set of small probability.  We now translate this into the desired
assertion about the fluctuations of~$\nnn$.

\begin{lemma}\label{mN-tilde-bnd}
If Assumptions~\ref{zeta-assns} are satisfied, then, on the event~$B_N(a)$,
\[
   \suttwt \nm{\nnn(t)} \Le \sqrt a \,K_{\ref{mN-tilde-bnd}}\,\sflnn,
\]
where the constant $K_{\ref{mN-tilde-bnd}}$ depends on $T$ and the
parameters of the process.
\end{lemma}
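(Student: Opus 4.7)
The starting point is the identity of Lemma \ref{fubini}, which after dividing by $N$ gives
\[
   \nnn(t) \Eq m_N(t) + \int_0^t R(t-s) A m_N(s)\,ds.
\]
Applying the triangle inequality together with the semigroup bound \Ref{xiR-bnd}, we obtain
\[
   \nm{\nnn(t)} \Le \nm{m_N(t)} + \int_0^t e^{w(t-s)} \nm{A m_N(s)}\,ds,
\]
so the task reduces to controlling $\nm{m_N(s)}$ and $\nm{Am_N(s)}$ uniformly on $0\le s \le T\wtaz$, on the event $B_N(a)$. In both cases the plan is to split the sum over coordinates according to whether $k\in\k_N(a)$ or $k\notin\k_N(a)$, since only the former are covered by the Chernoff-type bound of Lemma \ref{compt-bnd}.

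For $\nm{m_N(s)}$, on the event $B_N\ut(a)$ we have $|m_N^k(s)|\le \d_k(a)$ for $k\in\k_N(a)$, so that \Ref{mu-norm-delta} directly yields a contribution of order $\sqrt{aN^{-1}\log N}$. For $k\notin\k_N(a)$, the event $B_N\ui(a)$ forces $X_N^k\equiv 0$ on $[0,T\wtaz]$, hence $x_N^k(s)=x_N^k(0)=0$, and therefore from \Ref{local-mN}
\[
   m_N^k(s) \Eq -\int_0^s F_0^k(x_N(u))\,du.
\]
Since $|F_0^k(\x)|\le J_* \sum_{J\colon J^k\ne 0}\a_J(\x)$, the bound \Ref{h-bnd-1} with $h(k)=\m(k)$ controls $\sum_{k\notin\k_N(a)}\m(k)|F_0^k(x_N(u))|$ by $aJ_* \max_{k\notin\k_N(a)}\m(k)/\z(k)$. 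Using $\m(k)\le Z_*\ut\sqrt{\z(k)}$ and $\z(k) > \halfh aK_*TN/\log N$ for $k\notin\k_N(a)$, this maximum is of order $\sqrt{N^{-1}\log N/a}$, so after integration we again arrive at $O(\sqrt{aN^{-1}\log N})$.

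For $\nm{Am_N(s)}$, the key estimate is \Ref{m-A-sum}, which, after exchanging the order of summation, gives
\[
   \nm{Am_N(s)} \Le \sjo \m(j)(2|A_{jj}|+w)\,|m_N^j(s)|.
\]
Applying the same two-regime decomposition, the head contribution from $j\in\k_N(a)$ becomes a sum of $\m(j)(|A_{jj}|+1)\d_j(a)/\mbox{const}$, and since $\d_j(a)$ is proportional to $1/\sqrt{\z(j)}$, this is summable by the constant $Z$ from \Ref{zeta-cond}, giving again $O(\sqrt{aN^{-1}\log N})$. The tail contribution from $j\notin\k_N(a)$ is treated exactly as before, but with weight $\m(j)(2|A_{jj}|+w)$ in place of $\m(j)$; the bound $\m(j)(|A_{jj}|+1)/\z(j)\le Z_*\ut/\sqrt{\z(j)}$ together with $\z(j) > \halfh aK_*TN/\log N$ on this index set delivers the same order. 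Combining and performing the time integration yields the lemma, with the constant $K_{\ref{mN-tilde-bnd}}$ an explicit function of $T$, $w$, $K_*$, $J_*$, $Z$ and $Z_*\ut$.

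The step requiring the most care is the tail regime $k\notin\k_N(a)$, where Lemma \ref{compt-bnd} gives no information: the non-routine idea is to exploit the event $B_N\ui(a)$ to rewrite $m_N^k$ as the (negative) integrated compensator $-\int_0^s F_0^k(x_N(u))\,du$, which is then tamed using the definition of $\t\uN(a,\z)$ and Assumption \ref{zeta-assns}. Everything else is a matter of arranging the constants so that the two regimes both contribute at the same $\sqrt{aN^{-1}\log N}$ scale.
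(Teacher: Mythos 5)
Your proof follows essentially the same route as the paper's: use Lemma~\ref{fubini} to write $\nnn(t) = m_N(t) + \int_0^t R(t-s)Am_N(s)\,ds$, bound the semigroup via \Ref{xiR-bnd} and the matrix via \Ref{m-A-sum}, then split both sums over coordinates into $k\in\k_N(a)$ (controlled by the Chernoff bound $\d_k(a)$, summable against $\m$ via \Ref{mu-norm-delta} and against $\m(\cdot)(2|A_{\cdot\cdot}|+w)$ via $\ZZZ$) and $k\notin\k_N(a)$ (controlled by the key observation that on $B_N\ui(a)$ the coordinate is frozen at zero, so $m_N^k$ is minus the integrated compensator, which is then dominated by \Ref{h-bnd-2} together with the definition of $\k_N(a)$). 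One small note: where you correctly write $F_0^k$ for the compensator of coordinate $k$, the paper writes $F^k$ in the corresponding display and in Lemma~\ref{mn-tilde-bits}(3), but the bound it then uses, $|F^k(x)|\le J_*\sum_{J\colon J^k\ne 0}\a_J(x)$, is the one that holds for $F_0^k$, so your version is the cleaner formulation of the same argument; and your reference to \Ref{h-bnd-1} for the tail sum over $k\notin\k_N(a)$ should really be to \Ref{h-bnd-2}, which is the version aggregating over a set of coordinates.
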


\bp
From Lemma~\ref{fubini}, it follows that
\eqa
     \lefteqn{\suttwt\nm{\nnn(t)}} \label{nm-dissection}\\
    &\le& \suttwt\nm{m_N(t)}
       + \suttwt\int_0^t \nm{R(t-s)Am_N(s)}\,ds.\non
\ena
For the first term, on~$B_N(a)$ and for $0\le t\le T\wtaz$, we have
\[
     \nm{m_N(t)} \Le \sum_{k\in\k_N(a)} \m(k)\d_k(a)
     + \int_0^t \sum_{k\notin\k_N(a)} \m(k)|F^k(x_N(u))|\,du .
\]
The first sum is bounded using~\Ref{mu-norm-delta}
by $2 \ZZZ \sqrt{a K_* T}\,N^{-1/2}\sqrt{\log N}$, the second, 
from Lemma~\ref{mn-tilde-bits} and~\Ref{kappa-a-def}, 
by 
\[
   \frac{TaJ_*}{\min_{k \notin \k_N(a)}(\z(k)/\m(k))} \Le
     Z_*\ut 2J_*\sqrt{\frac{Ta}{K_*}}\,\sflnn .
\]

For the second term in~\Ref{nm-dissection}, from \Ref{R-inequality} 
and~\Ref{m-A-sum}, we note that
\eqs
   \nm{R(t-s)Am_N(s)} &\le& 
    \sko \m(k) \slo R_{kl}(t-s) \sro |A_{lr}| |m_N^r(s)| \\
   &\le& e^{w(t-s)}\slo \m(l) \sro |A_{lr}| |m_N^r(s)| \\
   &\le& e^{w(t-s)} \sro \m(r)\{2|A_{rr}|+w\} |m_N^r(s)|.
\ens
On~$B_N(a)$ and for $0\le s\le T\wtaz$, from~\Ref{zeta-cond}, the sum 
for~$r\in\k_N(a)$ is bounded using
\eqs
   \lefteqn{\sum_{r\in\k_N(a)} \m(r)\{2|A_{rr}|+w\} |m_N^r(s)|}\\
    &&\Le \sum_{r\in\k_N(a)} \m(r)\{2|A_{rr}|+w\}\d_r(a) \\
    &&\Le \sum_{r\in\k_N(a)} 
      \m(r)\{2|A_{rr}|+w\}\sqrt{\frac{4 a K_* T \log N}{N\z(r)}}\\
    &&\Le (2\vee w)\ZZZ \sqrt{4 a K_* T}\sflnn.
\ens
The remaining sum is then bounded by Lemma~\ref{mn-tilde-bits},
on the set~$B_N(a)$ and for $0\le s\le T\wtaz$, giving at most
\eqs
   \lefteqn{\sum_{r\notin\k_N(a)} \m(r)\{2|A_{rr}|+w\} |m_N^r(s)|}\\
    &&\Le \sum_{r\notin\k_N(a)} \m(r)\{2|A_{rr}|+w\}\int_0^s |F^r(x_N(t))|\,dt \\
   &&\Le \frac{(2\vee w)saJ_*}{\min_{k \notin \k_N(a)}(\z(k)/\m(k)\{|A_{kk}|+1\})}\\
   &&\Le (2\vee w)Z_*\ut 2J_*\sqrt{\frac{Ta}{K_*}}\,\sflnn\,.
\ens
Integrating, it follows that
\eqs
   \lefteqn{\suttwt\int_0^t \nm{R(t-s)Am_N(s)}\,ds}\\
   &\le& (2T\vee1)e^{wT}\Blb \sqrt{4 a K_* T}\ZZZ + 
     Z_*\ut J2J_*\sqrt{\frac{Ta}{K_*}}\Brb\,\sflnn,
\ens
and the lemma follows.
\ep
       
\bigskip
This has now established the control on~$\sutt \nm{\nnn(t)}$ that we need, in order
to translate~\Ref{main-ineq} into a proof of the main theorem.

\begin{theorem}\label{main-theorem}
Suppose that \Ref{finite-jumps}, \Ref{finite-rates}, \Ref{A-assn-1}, \Ref{A-assn-2}
and~\Ref{F-assn-2} are all satisfied, and that
Assumptions \ref{ap-assns}  and~\ref{zeta-assns} hold. Recalling the
definition~\Ref{rho-def} of~$\r(\z,\m)$,
for~$\z$ as given in Assumption~\ref{zeta-assns}, suppose that 
$S_{\r(\z,\m)}\uN(0) \le NC_*$ for some $C_* < \infty$.

Let~$x$ denote the solution to~\Ref{deterministic} with initial condition~$x(0)$
satisfying $S_{\r(\z,\m)}(x(0)) < \infty$.  Then $\tmax = \infty$.

Fix any $T$, and define $\Xi_T := \sup_{0\le t\le T}\nm{x(t)}$.
If $\nm{x_N(0) - x(0)} \le \half\Xi_T e^{-(w+k_*)T}$, where
$k_* := e^{wT}K(\m,F;2\Xi_T)$, then
there exist constants $c_1$, $c_2$ depending on $C_*$, $T$ and the parameters
of the process, such that for all $N$ large enough
\eqa
 \lefteqn{\pr \left(  \sutt\nm{x_N(t) - x(t)} 
      > \Bl e^{wT}\nm{x_N(0) - x(0)} + c_1\,\sflnn \Br e^{k_*T} \right)}\non\\
  && \qquad\Le \frac{c_2\log N}{N}.\phantom{HHHHHHHHHHHHHHHHHHHHHHHH} \label{main-ineq-1}
\ena
\end{theorem}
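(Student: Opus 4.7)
The plan is to exploit the parallel between the stochastic representation~\Ref{3} and its deterministic counterpart~\Ref{deterministic}, both built from the same semigroup~$R$ and nonlinearity~$F$. Dividing~\Ref{3} by~$N$, subtracting~\Ref{deterministic}, and applying~\Ref{xiR-bnd} together with the local Lipschitz condition~\Ref{F-assn-2} at radius~$2\Xi_T$ yields the integral inequality~\Ref{main-ineq}, valid on any interval $[0,t]$ throughout which $\nm{x_N(s)} \le 2\Xi_T$. A standard application of Gronwall's inequality then produces
\[
   \sup_{0\le s\le t}\nm{x_N(s) - x(s)} \Le \Bl e^{wT}\nm{x_N(0) - x(0)} + \sup_{0\le s\le t}\nm{\nnn(s)} \Br e^{k_* T}
\]
on such an interval, so the task reduces to controlling $\sutt\nm{\nnn(t)}$ in probability and closing a bootstrap that keeps $x_N$ inside the Lipschitz ball of radius $2\Xi_T$.

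For the probability step I would fix a single large constant $a$, chosen so that the quantity $\gga_a$ from Corollary~\ref{tau-az-bnd} is bounded away from zero. The initial-moment hypothesis $S_{\r(\z,\m)}\uN(0) \le NC_*$, combined with $\r(\z,\m) \ge \max\{r(\z), p(r(\z)), \tr_\m\}$ and the trivial domination $S_0, S_1 \le S_r$ for $r\ge1$ (since $\n(j)\ge 1$), supplies every moment hypothesis required by Corollary~\ref{tau-az-bnd} and hence gives $\pr[\t\uN(a,\z) \le T] = O(N^{-1})$. Together with the bound~\Ref{BN-complement} for $\pr[B_N(a)^c]$, the good event $G_N := B_N(a)\cap\{\t\uN(a,\z)>T\}$ has $\pr[G_N^c] \le c_2 (\log N)/N$, which is the probability side of~\Ref{main-ineq-1}; and on $G_N$, Lemma~\ref{mN-tilde-bnd} gives $\sutt\nm{\nnn(t)} \le \sqrt a\,K_{\ref{mN-tilde-bnd}}\,\sflnn$.

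To close the bootstrap, on $G_N$ I would introduce $\t^* := \inf\{u\ge0\colon \nm{x_N(u) - x(u)} > \Xi_T\}$, so that $\nm{x_N(u)} \le 2\Xi_T$ for $u<\t^*$ and the Gronwall bound above applies on $[0,\t^*\wedge T]$. The hypothesis on $\nm{x_N(0) - x(0)}$ forces $e^{wT}\nm{x_N(0) - x(0)}\,e^{k_* T} \le \half\Xi_T$, and for $N$ large enough the $\nnn$-contribution is also at most $\half\Xi_T$, so the right-hand side stays strictly below $\Xi_T$; this forces $\t^*>T$ on $G_N$ and hence extends the Gronwall bound to all of $[0,T]$, yielding~\Ref{main-ineq-1} with $c_1 = \sqrt a\,K_{\ref{mN-tilde-bnd}}$. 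The assertion $\tmax = \infty$ follows from a separate Gronwall argument on~\Ref{deterministic} using~\Ref{xiR-bnd} and Assumption~\ref{ap-assns}, which gives uniform-in-$t$ control of $S_{\r(\z,\m)}(x(t))$ and hence of $\nm{x(t)}$, since $\m(j)/\n_{\tr_\m}(j)\to0$ implies $\nm{\cdot} \le C S_{\tr_\m}(\cdot)$. The principal obstacle is not any single inequality but the orchestration of the stopping times $\t\uN(a,\z)$, $\t^*$ and~$T$ alongside the event $B_N(a)$, so that the bootstrap, the probability estimates from Corollary~\ref{tau-az-bnd} and~\Ref{BN-complement}, and the initial-state moment hypothesis all mesh consistently; once $a$ is fixed the remaining work is routine Gronwall and constant-chasing.
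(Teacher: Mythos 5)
Your argument for the probability estimate~\Ref{main-ineq-1} matches the paper's quite closely: subtract the deterministic mild-solution formula from the stochastic representation, invoke the $\mu$-Lipschitz bound for~$F$ on the ball of radius~$2\Xi_T$ to get~\Ref{main-ineq}, fix $a$ at (or above) the threshold in Corollary~\ref{tau-az-bnd} so that $\pr[\t\uN(a,\z)\le T]=O(N^{-1})$ and, via~\Ref{BN-complement}, $\pr[B_N(a)^c]=O(N^{-1}\log N)$, use Lemma~\ref{mN-tilde-bnd} to bound $\sutt\nm{\nnn(t)}$ on the good event, and close with a stopping-time bootstrap to keep $x_N$ inside the Lipschitz ball. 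The observation that $S_{\r(\z,\m)}\uN(0)\le NC_*$ supplies all lower-order moment hypotheses, since $\n(j)\ge1$, is also the device the paper uses. This part is sound and is the same route.

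Where you diverge — and where there is a genuine gap — is the claim that $\tmax=\infty$ ``follows from a separate Gronwall argument on~\Ref{deterministic} using~\Ref{xiR-bnd} and Assumption~\ref{ap-assns}.'' That Gronwall argument is not available at face value. The mild solution~$x$ is only known to exist in~$\rr_\m$ on $[0,\tmax)$; Assumption~\ref{ap-assns} provides bounds on $U_r(\x)=\n_r^T F_0(\x)$, which is the formal time-derivative of $S_r(x(t))$ along the flow, but turning this into a Gronwall inequality for $S_r(x(t))$ requires (i) knowing that the mild solution is a classical solution of $\dot x=F_0(x)$ for which term-by-term differentiation of the series $\sjo\n_r(j)x^j(t)$ is legitimate, and (ii) knowing a priori that $S_r(x(t))<\infty$ on the interval in question — neither of which is established, and the second is essentially what you would be trying to prove. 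The inequality~\Ref{xiR-bnd} is stated for the $\m$-norm, not for the $\n_r$-weighted norms, and no analogue of Theorem~\ref{semigroup} is available in those spaces. The paper sidesteps all of this by a more indirect argument: pick an initial condition $x_N(0)$ for the stochastic system converging to $x(0)$, apply the already-proved approximation result on any $T<\tmax$, note that the a priori bounds (Lemma~\ref{AP-1}, Theorem~\ref{AP-2}, and Assumption~\ref{zeta-assns} part~1) keep $\sup_{t\le 2T}\nm{x_N(t)}$ bounded in probability, and conclude that $\nm{x(t)}$ cannot blow up as $t\to\tmax$. You would need to either supply the missing analytic infrastructure for your direct Gronwall argument or adopt the paper's indirect stochastic route.
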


\bp
As $S_{\r(\z, \mu)}\uN(0) \le NC_*$, it follows also that 
$S_r\uN(0) \le NC_*$ for all $0\le r\le \r(\z,\m)$.  Fix any $T < \tmax$, take
$C := 2(C_*+k_{04}T)e^{k_{01}T}$, and observe that, 
for $r \le \r (\z, \mu) \wedge r_{\max}^{(2)}$, and such that $p(r) \le \r (\z, \mu)$, 
we can take
\eq\label{C''-bnd}
   C''_{rT} \Le \tC_{rT} \Def \{2(C_*\vee1) + k_{r4}T\}e^{(k_{r1}+Ck_{r2})T},
\en
in Theorem~\ref{AP-2}, 
since we can take $C_*$ to bound $C_r$ and~$C'_r$.
In particular, $r=r(\z)$ as defined in Assumption~\ref{zeta-assns} 
satisfies both the conditions on~$r$ for~\Ref{C''-bnd} to hold.  
Then, taking $a := \{k_2 + k_1\tC_{r(\z)T}\}^{\bbt(\z)}$
in Corollary~\ref{tau-az-bnd}, it follows that for some constant $c_3 > 0$, on the event $B_N(a)$,
\[
   \pr[\t\uN(a,\z) \le T] \Le c_3N^{-1}.
\]
Then, from~\Ref{BN-complement}, for some constant $c_4$, $\pr[B_N(a)^c] \le c_4 N^{-1}\log N$.
Here, the constants~$c_3,c_4$ depend on~$C_*$, $T$ and the parameters of the process.

We now use Lemma~\ref{mN-tilde-bnd} to bound the martingale term in~\Ref{main-ineq}. It follows that,
on the event~$B_N(a) \cap \{\t\uN(a,\z) > T\}$ and on the event that $\nm{x_N(s) - x(s)} \le \Xi_T$
for all $0\le s\le t$,
\eqs
   \nm{x_N(t) - x(t)} &\le& \Bl e^{wT}\nm{x_N(0) - x(0)} 
        + \sqrt a\,K_{\ref{mN-tilde-bnd}}\,\sflnn \Br \\
    &&\quad\mbox{} + k_*\int_0^t \nm{x_N(s) - x(s)}\,ds,
\ens
where $k_* := e^{wT}K(\m,F;2\Xi_T)$.  Then from Gronwall's inequality, on the 
event~$B_N(a) \cap \{\t\uN(a,\z) > T\}$,
\eq\label{Gronwall-ineq}
   \nm{x_N(t) - x(t)} 
      \Le \Bl e^{wT}\nm{x_N(0) - x(0)} + \sqrt a\,K_{\ref{mN-tilde-bnd}}\,\sflnn \Br
         e^{k^*t},
\en
for all $0\le t\le T$, provided that
\[
   \Bl e^{wT}\nm{x_N(0) - x(0)} + \sqrt a\,K_{\ref{mN-tilde-bnd}}\,\sflnn \Br
     \Le \Xi_T e^{-k^*T}.
\]
This is true for all~$N$ sufficiently large, if  
$\nm{x_N(0) - x(0)} \le \half\Xi_T e^{-(w+k^*)T}$, which we have assumed. We have thus proved~(\ref{main-ineq-1}), since, as shown above,  
$\pr (B_N(a)^c \cup \{\t\uN(a,\z) > T\}^c) = O(N^{-1}\log N)$.

We now use this to show that in fact $\tmax = \infty$.  For~$x(0)$ as above,
we can take $x_N^j(0) := N^{-1}\lfloor Nx^j(0) \rfloor \le x^j(0)$, so that
$S_{\r(\z,\m)}\uN(0) \le NC_*$ for $C_* := S_{\r(\z,\m)}(x(0)) < \infty$.
Then, by~(\ref{rho-def}), $\lim_{j\to\infty}\{\m(j)/\n_{\r(\z,\m)}(j)\} = 0$, so it follows
easily using bounded convergence that $\nm{x_N(0)-x(0)} \to 0$ as ${N\to\infty}$.  Hence, for any
$T < \tmax$, it follows from~\Ref{main-ineq-1}
that 
$\nm{x_N(t)-x(t)} \to_{D} 0$ as ${N\to\infty}$, for $t \le T$, with uniform bounds over 
the interval, where `$\to_{D}$' denotes convergence in distribution. 
Also, by Assumption~\ref{zeta-assns}, there is a constant $c_5$ such that 
$\nm{x_N(t)} \le c_5N^{-1}S_{r_\m}\uN(t)$ for each $t$, where $r_\m \le \rmax\ut$ 
and $r_{\mu} \le \rho (\zeta, \mu)$.
Hence, using Lemma~\ref{AP-1} and Theorem~\ref{AP-2},
$\sup_{0\le t\le 2T}\nm{x_N(t)}$ remains bounded in probability
as $N\to\infty$. 
Hence it is impossible that $\nm{x(t)} \to \infty$ as $T \to \tmax < \infty$,
implying that 
in fact $\tmax = \infty$ for such~$x(0)$.
\ep

\bigskip
\nin{\bf Remark}.\ The dependence on the initial conditions is considerably
complicated by the way the constant~$C$ appears in the exponent, for
instance in the expression for~$\tC_{rT}$ in the proof of Theorem~\ref{main-theorem}.  
However, if $k_{r2}$ in Assumptions~\ref{ap-assns} can be
chosen to be zero, as for instance in the examples below, the
dependence simplifies correspondingly. 

\bigskip
There are biologically plausible models in which the restriction to $J^l \ge -1$ is
irksome.  In populations in which members of a given type~$l$ can fight
one another,  a natural possibility is to have a transition $J = - 2e^{(l)}$  
at a rate proportional to $X^l(X^l-1)$, which translates to 
$\a_J = \a_J\uN = \g x^l(x^l - N^{-1})$, a function depending on~$N$. Replacing
this with $\a_J = \g (x^l)^2$ removes the $N$-dependence, but yields a process 
that can jump to negative values of~$X^l$.  For this reason, it is useful to be
able to allow the transition rates~$\a_J$ to depend on~$N$.

Since the arguments in this paper are not limiting arguments for $N\to\infty$,
it does not require many changes to derive the corresponding results.  
Quantities such as $A$, $F$, $U_r(x)$ and $V_r(x)$ now depend on~$N$; however,
Theorem~\ref{main-theorem} continues to hold with constants $c_1$ and~$c_2$
that do not depend on~$N$, provided that $\m$, $w$, $\n$, the $k_{lm}$ from 
Assumption~\ref{ap-assns} and $\z$ from Assumption~\ref{zeta-assns} can be chosen to
be independent of~$N$, and that the quantities $Z_*^{(l)}$ from~\Ref{Zstar-defs}
can be bounded uniformly in~$N$.  On the other hand, the solution~$x=x\uN$
of~\Ref{deterministic} that acts as approximation to~$x_N$ in 
Theorem~\ref{main-theorem} now itself depends on~$N$, through $R=R\uN$ and
$F=F\uN$.  If~$A$ (and hence~$R$) can be taken to be independent of~$N$, 
and $\lim_{N\to\infty}\nm{F\uN-F} = 0$ for some fixed $\m$--Lipschitz 
function~$F$, a Gronwall argument can be used to derive a bound for the 
difference between~$x\uN$ and the (fixed) solution~$x$ to equation~\Ref{deterministic} 
with $N$-independent $R$ and~$F$. If~$A$ has to depend on~$N$, the situation is more
delicate. 

\section{Examples}\label{examples}
\setcounter{equation}{0}
We begin with some general remarks, to show that the assumptions
are satisfied in many practical contexts.  We then discuss
two particular examples, those of Kretzschmar~(1993)
and of Arrigoni~(2003), that fitted poorly or not at all into the
general setting of Barbour \& Luczak~(2008), though the other systems
referred to in the introduction could also be treated similarly.  
In both of our chosen examples,
the index~$j$ represents a number of individuals ---  parasites in a
host in the first, animals in a patch in the second --- and we shall
for now use the former terminology for the preliminary, general
discussion. 

Transitions that
can typically be envisaged are: births of a few parasites, which may
occur either in the same host, or in another, if infection 
is being represented; births and immigration of hosts, with or without parasites;
migration of parasites between hosts; deaths of
parasites;  deaths of hosts; and treatment of hosts, leading to the deaths 
of many of the host's parasites.
For births of parasites, there is a transition $X\to X+J$, where~$J$
takes the form 
\eq\label{1-1-transition}
     J_l \Eq 1;\quad J_m \Eq -1; \quad J_j \Eq 0, \ j\neq l,m,
\en
indicating that one $m$-host has become an $l$-host.  For births
of parasites within a host, a transition rate of the form $b_{l-m}mX_m$
could be envisaged, with $l > m$, the interpretation 
being that there are~$X_m$ hosts with parasite burden~$m$, 
each of which gives birth to~$s$ offspring at
rate~$b_s$, for some small values of~$s$.  For infection of an $m$-host,
a possible transition rate would be of the form
\[
      X_m \sum_{j\ge0}N^{-1}X_j \l p_{j,l-m},
\]
since an $m$-host comes into contact with $j$-hosts at a rate proportional
to their density in the host population, and $p_{jr}$ represents the
probability of a $j$-host transferring~$r$ parasites to the infected
host during the contact.  The probability distributions $p_{j\cdot}$
can be expected to be stochastically increasing in~$j$.  Deaths of
parasites also give rise to transitions of the form~\Ref{1-1-transition},
but now with $l < m$, the simplest form of rate being just $dmX_m$ for $l = m-1$,
though~$d=d_m$ could also be chosen to increase with parasite burden.
Treatment of a host would lead to values of $l$ much smaller than~$m$,
and a rate of the form $\k X_m$ for the transition with $l=0$ would
represent fully successful treatment of randomly chosen individuals.
Births and deaths of hosts and immigration all lead to transitions of 
the form
\eq\label{pm1-transition}
     J_l \Eq \pm 1; \quad J_j \Eq 0, \ j\neq l.
\en
For deaths, $J_l=-1$, and a typical rate would be $d'X_l$.  For
births, $J_l = 1$, and a possible rate would be $\sum_{j\ge0}X_j b'_{jl}$ 
(with $l=0$ only, if new-born individuals are free of parasites).
For immigration, constant rates $\l_l$ could be supposed.
Finally, for migration of individual parasites between hosts, transitions are
of the form
\eq\label{migration}
     J_l \Eq J_m \Eq - 1; \quad J_{l+1} \Eq 1;\quad J_{m-1} \Eq 1; \quad
     J_j \Eq 0, \ j\neq l,m,l+1,m-1,
\en
a possible rate being $\g mX_m N^{-1}X_l$.

   For all the above transitions, we can take $J_*=2$ in~\Ref{finite-jumps},
and~\Ref{finite-rates} is satisfied in biologically sensible models.
\Ref{A-assn-1} and~\Ref{A-assn-2} depend on the way in which the matrix~$A$
can be defined, which is more model specific; in practice, \Ref{A-assn-1}
is very simple to check. The choice of~$\m$ in~\Ref{A-assn-2}
is influenced by the need to have~\Ref{F-assn-2} satisfied. 
For Assumptions~\ref{ap-assns}, a possible
choice of~$\n$ is to take $\n(j)=(j+1)$ for each~$j\ge0$, with $S_1(X)$
then representing the number of hosts plus the number of parasites.
Satisfying \Ref{3.5a} is then easy for transitions only involving the
movement of a single parasite, but in general requires assumptions
as to the existence of the $r$-th moments of the distributions
of the numbers of parasites introduced at birth, immigration and infection events.
For~\Ref{U-bnd}, in which transitions involving a net reduction in
the total number of parasites and hosts can be disregarded,
the parasite birth events are those in which the rates typically
have a factor~$mX_m$ for transitions with $J_m=-1$, with~$m$
in principle unbounded.  However,  at such
events, an $m$-individual changes to an $m+s$ individual, with the
number~$s$ of offspring of the parasite being typically small, so that the value
of $J^T\n_r$ associated with this rate has magnitude $m^{r-1}$; the product
$mX_m\,m^{r-1}$, when summed over~$m$, then yields a contribution of
magnitude $S_r(X)$, which is allowable in~\Ref{U-bnd}.  Similar
considerations show that the terms $N^{-1}S_0(X)S_r(X)$ accommodate
the migration rates suggested above.  Finally, in order to have 
Assumptions~\ref{zeta-assns} satisfied,
it is in practice necessary that Assumptions~\ref{ap-assns} are
satisfied for large values of~$r$, thereby imposing restrictions on
the distributions of the numbers of parasites introduced at birth, immigration 
and infection events, as above.

\subsection{Kretzschmar's model}
Kretzschmar~(1993) introduced a model of a parasitic infection, 
in which the transitions from state~$X$ are as follows:
$$
\begin{array}{rlll}
  J\ &\Eq e^{(i-1)} - e\uii &\mbox{at rate}\quad Ni\m x^i,\qquad &i\ge1;\\[1ex]
  J\ &\Eq -e\uii  &\mbox{at rate}\quad N(\k+i\a) x^i,\qquad &i\ge0;\\[1ex]
  J\ &\Eq e^{(0)}  &\mbox{at rate}\quad N\b\sio x^i\theta^i;\\[1ex]
  J\ &\Eq e^{(i+1)} - e\uii &\mbox{at rate}\quad N\l x^i \f(x),
     \qquad &i \ge 0,
\end{array}
$$
where $x := N^{-1}X$, $\f(x) := \|x\|_{11}\{c + \|x\|_1\}^{-1}$ with $c>0$,
and $\|x\|_{11} := \sji j|x|^j$; here, $0 \le \theta \le 1$,
and $\theta^i$ denotes its $i$-th power (our $\theta$ corresponds to the
constant $\xi$ in~\cite{kr93}). Both \Ref{finite-jumps} and~\Ref{finite-rates}
are obviously satisfied.  For Assumptions \Ref{A-assn-1}, \Ref{A-assn-2} 
and~\Ref{F-assn-2}, we note that equation corresponding to~\Ref{F-assn} has
\eqs
   A_{ii} &=& -\{\k + i(\a+\m)\};\quad A^T_{i,i-1} \Eq i\m 
    \ \mbox{and}\ A^T_{i0} \Eq \b\theta^i,\qquad i\ge2;\\
   A_{11} &=& -\{\k + \a+\m\};\quad  A^T_{10} \Eq \m+\b\theta;\\
   A_{00} &=& -\k+\b, \qquad i\ge1,
\ens
with all other elements of the matrix equal to zero, and 
\[
    F^i(x) \Eq \l (x^{i-1} - x^i)\f(x),\quad i\ge1; \qquad  
    F^0(x) \Eq -\l x^0 \f(x).
\]
Hence Assumption \Ref{A-assn-1} is immediate, and Assumption~\Ref{A-assn-2}
holds for $\m(j) = (j+1)^s$, for any $s\ge0$, with $w=(\b-\k)_+$.  For the choice
$\m(j) = j+1$, $F$ maps elements of ${\mathcal R}_{\mu}$ to ${\mathcal R}_{\mu}$, 
and is also locally Lipschitz in the $\m$-norm, with 
$K(\m,F;\Xi) = c^{-2}\l\Xi(2c + \Xi)$. 

For Assumptions~\ref{ap-assns},
choose $\n=\m$; then~\Ref{3.5a} is a finite sum for each $r \ge 0$. Turning to~\Ref{U-bnd},
it is immediate that $U_{0}(x) \le \b S_0(x)$.  Then, for $r\ge1$, 
\eqs
   \sio \l \f(N^{-1}X) X^i\{(i+2)^r - (i+1)^r\} &\le&
          \l \frac{S_1(X)}{S_0(X)}\sio r X^i (i+2)^{r-1} \\
    &\le& r 2^{r-1}\l S_r(X),
\ens
since, by Jensen's inequality, $S_1(X)S_{r-1}(X) \le S_0(X)S_r(X)$.
Hence we can take $k_{r2} = k_{r4} = 0$ and $k_{r1} = \b + r 2^{r-1}\l$
in~ \Ref{U-bnd}, for any $r\ge1$, so that $r_{\max}^{(1)}=\infty$.  
Finally, for~\Ref{V-bnd},
\[
    V_{0}(x) \Le (\k+\b)S_0(x) + \a S_1(x),
\]
so that $k_{03}=\k+\b+\a$ and $k_{05}=0$, and 
\eqs
   V_{r}(x)
 \Le r^2 (\k S_{2r}(x) + \a S_{2r+1}(x)
   +  \m S_{2r-1}(x) +  2^{2(r-1)}\l S_{2r-1}(x)) + \b S_0(x),
\ens
so that we can take $p(r) = 2r+1$, $k_{r3} = \b + r^2\{\k+\a+\m + 2^{2(r-1)}\l\}$,
and $k_{r5}=0$ for any $r\ge1$, and so $r_{\max}^{(2)} = \infty$.  In Assumptions~\ref{zeta-assns},
we can clearly take $r_\m=1$ and $\z(k) = (k+1)^7$, giving $r(\z) = 8$,
$\bbt(\z) = 1$ and $\r(\z,\m) = 17$.

\subsection{Arrigoni's model}
In the metapopulation model of Arrigoni~(2003), the transitions from state~$X$ are
as follows:
$$
\begin{array}{rlll}
  J &=\ e^{(i-1)} - e\uii &\mbox{at rate}\ Ni x^i(d_i + \g(1-\r)),\quad &i\ge2;\\[1ex]
  J &=\ e^{(0)} - e\ui &\mbox{at rate}\ N x^1(d_1 + \g(1-\r) + \k);\\[1ex]
  J &=\ e^{(i+1)} - e\uii &\mbox{at rate}\ Ni b_i x^i,
       \quad &i \ge 1;\\[1ex]
  J &=\ e^{(0)} - e\uii &\mbox{at rate}\ N x^i \k, \quad &i\ge2;\\[1ex]
  J &=\ e^{(k+1)} - e^{(k)} + e^{(i-1)} - e\uii &\mbox{at rate}\ N i x^i x^k \r\g,
     \qquad\qquad k\ge0,\ &i\ge1;
\end{array}
$$
as before, $x := N^{-1}X$.
Here, the total number~$N = \sjo X_j = S_0(X)$ of patches remains constant throughout, 
and the number
of animals in any one patch changes by at most one at each transition; in the
final (migration) transition, however, the numbers in two patches change simultaneously.
In the above transitions, $\gamma, \rho, \kappa$ are non-negative, and $(d_i), (b_i)$ are sequences of non-negative numbers.

Once again, both \Ref{finite-jumps} and~\Ref{finite-rates}
are obviously satisfied.  The equation corresponding to~\Ref{determ} can now be
expressed by taking
\eqs
   A_{ii} &=& -\{\k + i(b_i+d_i+\g)\};\quad A^T_{i,i-1} \Eq i(d_i+\g);
     \quad A^T_{i,i+1} \Eq i b_i, \qquad i\ge1; \\       
   A_{00} &=& -\k, 
\ens
with all other elements of~$A$ equal to zero, and 
\[
    F^i(x) \Eq \r\g \|x\|_{11}(x^{i-1} - x^i),\quad i\ge1; \qquad  
    F^0(x) \Eq -\r\g x^0 \|x\|_{11} + \k,
\]
where we have used the fact that $N^{-1}\sjo X_j = 1$.
Hence Assumption \Ref{A-assn-1} is again immediate, and Assumption~\Ref{A-assn-2}
holds for $\m(j) = 1$ with $w=0$, for $\m(j) = j+1$ with $w = \max_i(b_i-d_i-\g-\k)_+$ (assuming $(b_i)$ and $(d_i)$ to be such that this is finite), 
or indeed for $\m(j) = (j+1)^s$ with any $s\ge2$, with appropriate choice of~$w$.  
With the choice $\m(j) = j+1$, $F$ again maps elements of ${\mathcal R}_{\mu}$ to  
${\mathcal R}_{\mu}$, and is also locally Lipschitz in the $\m$-norm, with 
$K(\m,F;\Xi) = 3\r\g\Xi$.

To check Assumptions~\ref{ap-assns},
take $\n=\m$; once again, \Ref{3.5a} is a finite sum for each $r$. Then, for~\Ref{U-bnd},
it is immediate that $U_{0}(x) = 0$.  For any $r\ge1$, using arguments from
the previous example,
\eqs
   U_{r}(x) &\le& 
      r2^{r-1}\Blb \sii ib_i x^i (i+1)^{r-1} + \sii\sko i\r\g x^ix^k (k+1)^{r-1}\Brb
           \\
    &\le& r 2^{r-1}\{\max_i b_i\, S_r(x) + \r\g S_1(x)S_{r-1}(x)\}\\
    &\le& r 2^{r-1}\{\max_i b_i\, S_r(x) + \r\g S_0(x)S_{r}(x)\},
\ens
so that, since $S_0(x) = 1$, we can take  
$k_{r1} = r 2^{r-1}(\max_i b_i + \r\g)$ and $k_{r2} = k_{r4} = 0$ in~ \Ref{U-bnd}, 
and $r_{\max}^{(1)} = \infty$.  Finally, for~\Ref{V-bnd},
$V_{0}(x) = 0$ and, for $r \ge 1$,
\eqs
   \lefteqn{V_{r}(x)}\\
 &\le& r^2 \Blb 2^{2(r-1)}\max_i b_i\, S_{2r-1}(x) + \max_i(i^{-1}d_i) S_{2r}(x)
    + \g(1-\r)S_{2r-1}(x) \right.\\
  &&\qquad\mbox{}\left. + \r\g (2^{2(r-1)}S_1(x)S_{2r-2}(x) + S_0(x)S_{2r-1}(x))
    \vphantom{\max_i b_i}\Brb  +  \k S_{2r}(x),
\ens
so that we can take $p(r) = 2r$, and (assuming $i^{-1} d_i$ to be finite)
\[
   k_{r3} = \k + r^2\{2^{2(r-1)}(\max_i b_i + \r\g) + \max_i(i^{-1}d_i) + \g\},
\]
and $k_{r5}=0$ for any $r\ge1$, and $r_{\max}^{(2)}= \infty$.  In Assumptions~\ref{zeta-assns},
we can again take $r_\m=1$ and $\z(k) = (k+1)^7$, giving $r(\z) = 8$,
$\bbt(\z) = 1$ and $\r(\z,\m) = 16$.

\section*{Acknowledgement} 
We wish to thank a referee for recommendations that have substantially streamlined 
our arguments.
ADB wishes to thank both the Institute for Mathematical Sciences of the 
National University of Singapore and the Mittag--Leffler Institute 
for providing a welcoming environment while part of this work was
accomplished.  MJL thanks the University of Z\"urich for their hospitality on a number of visits.


\begin{thebibliography}{3} 
 
\bibitem{a03} 
{\sc Arrigoni, F.} (2003). Deterministic approximation of a stochastic 
metapopulation model. {\em Adv. Appl. Prob.} {\bf 35} 691--720. 
 

 
\bibitem{ka93} 
{\sc Barbour, A.\ D.} and {\sc Kafetzaki, M.} (1993). 
A host--parasite model yielding heterogeneous parasite loads. 
 {\em J.~Math.\ Biology} {\bf 31}  157--176.
 
\bibitem{BL08} 
{\sc Barbour, A.\ D.} and {\sc Luczak, M.\ J.} (2008). 
Laws of large numbers for epidemic models with countably many types.    
{\it Ann.\ Appl.\ Probab.\/}~{\bf 18} 2208--2238.

\bibitem{Chow07}
{\sc Chow, P.-L.} (2007).
{\em Stochastic partial differential equations.} Chapman and Hall, Boca Raton.

\bibitem{ew03}
{\sc Eibeck, A.} and {\sc Wagner, W.} (2003).
Stochastic interacting particle systems and non-linear kinetic equations.
{\em Ann.\ Appl.\ Probab.\/} {\bf 13} 845--889.
 
\ignore{
\bibitem{ek86} 
{\sc S.\ N.\ Ethier \& T.\ G.\ Kurtz} (1986).
 {\em Markov Processes: Characterization and 
Convergence.} Wiley, New York. 
}
 

\bibitem{kimax02} 
{\sc Kimmel, M.} and {\sc Axelrod, D.\ E.} (2002). {\em Branching processes in biology.\/} 
Springer, Berlin. 
 
\ignore{ 
\bibitem{kr89} 
M.~Kretzschmar, Persistent solutions in a model for para\-sitic infections. 
{\em J. Math. Biol.\/} {\bf 27} (1989) 549--573. 
} 
 
\bibitem{kr93} 
{\sc Kretzschmar, M.} (1993). Comparison of an infinite dimensional model for para\-sitic 
diseases with a related 2-dimensional system. {\em J. Math. Analysis Applics} 
{\bf 176} 235--260. 
 

 
\bibitem{k70} 
{\sc Kurtz, T.\ G.} (1970). 
Solutions of ordinary differential equations as limits of pure jump Markov processes.
{\it J.\ Appl.\ Probab.\/}~{\bf  7} 49--58.

\bibitem{k71} 
{\sc Kurtz, T.\ G.} (1971).
Limit theorems for sequences of jump Markov processes approximating 
ordinary differential processes.  
{\it J.\ Appl.\ Probab.\/}~{\bf  8} 344--356. 


\bibitem{le9?}
{\sc L\'eonard, C.} (1990).
Some epidemic systems are long range interacting particle systems.
In: {\it Stochastic Processes in Epidemic Theory\/}, Eds J.-P. Gabriel,
C.~Lef\`evre \& P.~Picard, {\em Lecture Notes in Biomathematics\/} {\bf 86}
170--183: Springer, New York.
 
\bibitem{l99} 
{\sc Luchsinger, C.\ J.} (1999). Mathematical Models of a Parasitic Disease, 
Ph.D. thesis, University of Z\"urich. 
 
\bibitem{lu01a} {\sc Luchsinger, C.\ J.} (2001a). 
Stochastic models of a parasitic infection, exhibiting three basic 
reproduction ratios. {\it J.\ Math.\ Biol.\/} \vol{42} 532--554. 
 
\bibitem{lu01b} {\sc Luchsinger, C.\ J.} (2001b). Approximating the long term behaviour 
of a model for parasitic infection. {\it J.\ Math.\ Biol.\/} \vol{42} 555--581. 
 

 
\bibitem{pazy} 
{\sc Pazy, A.} (1983). {\em Semigroups of Linear Operators and Applications to 
Partial Differential Equations.} Springer, Berlin. 

\bibitem{reuter57}
{\sc Reuter, G.\ E.\ H.} (1957). Denumerable Markov processes and the associated
contraction semigroups on~$l$. {\it Acta Math.\/} \vol{97} 1--46.
 
\end{thebibliography}
\end{document}